	\crefname{subsection}{Subsection}{Subsections}
	\Crefname{subsection}{Subsection}{Subsections}
\newcommand{\snort}{\textsc{Snort }}
\newcommand{\col}{\textsc{Col }}
\newcommand{\domineering}{\textsc{Domineering }}
\newcommand{\Snort}{\textsc{Snort}}
\newcommand{\Domineering}{\textsc{Domineering}}
\DeclareMathOperator{\link}{link}
\renewcommand{\L}{\mathfrak{L}}
\newcommand{\R}{\mathfrak{R}}
\newcommand{\G}{\mathbb{G}}
\newcommand{\I}{\text{I}}
\newcommand{\V}{\mathbb{V}}
\DeclareMathOperator{\mex}{mex}
\DeclareMathOperator{\birth}{b}
\DeclareMathOperator{\birthformal}{\widetilde{b}}
\newcommand{\incomp}{\not\gtrless}
\theoremstyle{definition} \newtheorem{definition}{Definition}[section]
\theoremstyle{plain} \newtheorem{theorem}[definition]{Theorem}
\theoremstyle{plain} \newtheorem{corollary}[definition]{Corollary}
\theoremstyle{plain} \newtheorem{proposition}[definition]{Proposition}
\theoremstyle{plain} 
\theoremstyle{plain} 
\theoremstyle{definition} \newtheorem{example}[definition]{Example}
\theoremstyle{definition} \newtheorem{remark}[definition]{Remark}
\theoremstyle{definition} 
\theoremstyle{definition} 
\theoremstyle{definition} 
\DeclareRobustCommand\onedot{\futurelet\@let@token\@onedot}
\def\@onedot{\ifx\@let@token.\else.\null\fi\xspace}
\def\ie{{i.e}\onedot}
\newcommand{\Ldomino}[2]{\filldraw[fill=black!30!white] (#1+0.2,#2+0.2)--(#1+0.8,#2+0.2)--(#1+0.8,#2+1.8)--(#1+0.2,#2+1.8)--cycle;}
\newcommand{\Rdomino}[2]{\filldraw[fill=black!30!white] (#1+0.2,#2+0.2)--(#1+1.8,#2+0.2)--(#1+1.8,#2+0.8)--(#1+0.2,#2+0.8)--cycle;}
\begin{document}
\title{Game Values of Strong Placement Games}
\author{Svenja Huntemann \textsuperscript{1}}
\address{\textsuperscript{1} School of Mathematics and Statistics\\ Carleton University\\ Ottawa, Canada\\ and Dept.~of Mathematics and Statistics\\
Dalhousie University\\ Halifax, Canada}

\keywords{Combinatorial game, strong placement game, game value.}

\thanks{The author's research was supported by the Natural Sciences and Engineering Research Council of Canada (funding reference numbers PDF-516619-2018 and CGSD3-459150-2014) and the Killam Trust.}

\begin{abstract}
The legal positions of a strong placement games, such as \Domineering, form a simplicial complex called the legal complex. In this paper, we use the legal complex to study the game values taken on by the class of strong placement games using the legal complex. We show that many interesting values are possible, including all numbers and nimbers. We further consider how structures of the legal complex influence which values are possible.
\end{abstract}

\maketitle

\section{Introduction}

Combinatorial games are 2-player perfect information games such as Chess, Checkers, or Go. To each combinatorial game, or position in a game, one assigns a game value, which in some sense indicates which player has the advantage and by how much. A problem of interest in combinatorial game theory is the range of values that occur in a game. One of the most celebrated results in combinatorial game theory is the Sprague-Grundy Theorem which in essence states that the impartial game \textsc{Nim} takes on all game values possible for impartial games - those games in which both players always have the same options. Motivated by this is the search for a nontrivial finite game which takes on all games values possible, even for partizan games - those where the options for the players might differ.

A game taking on all possible values is called \textbf{universal}, and Carvalho and Santos \cite{CarvalhoS2019} recently constructed the first known nontrivial universal game. In this paper we study the possible values of strong placement (SP-)games, a class of games with nice structure. The game introduced in \cite{CarvalhoS2019} is not an SP-game, and it is still an open question whether the class of SP-games is universal.

The question of the range of values has received attention for some specific SP-games. The only complete result for partizan SP-games is that \col only takes on numbers and numbers plus star (see \cite[p.47]{BerlekampCG2004}). Some partial results are known for \textsc{Domineering} (see for example \cite{Kim1996,UiterwijkB2015}) and for \snort (see \cite[pp.181--183]{BerlekampCG2004}). 

Since SP-games are much easier to understand than many other combinatorial games, if we are able to show that SP-games take on all possible game values, the class of SP-games would provide an excellent new tool for studying combinatorial games. But even if SP-games are not universal, being able to restrict the possible values would simplify game value calculations.

In \cite{FaridiHN2019c} it was shown that each SP-game is in a one-to-one correspondence with a simplicial complex. We will take advantage of this fact in the exploration of the universality of SP-games in this paper. Although we are not able to determine their universality either way, we are able to show that many interesting values are possible.

In \cref{sec:background} we will give an introduction to SP-games and their connection with simplicial complexes, as well as all background  on game values required. Readers familiar with combinatorial game theory can skip the parts between \cref{def:basicPosition} and \cref{def:simplicialComplex} and between \cref{remark} and \cref{def:ValueSet}. In \cref{sec:ValueBirthday} we will concentrate on game values with small birthdays, essentially those games with few moves before the game ends. We then focus on specific game values in \cref{sec:values}, namely numbers, switches, and tinies. Finally, we discuss impartial SP-games in \cref{sec:impartial}.

Proofs for all combinatorial game theory results given without reference can be found in \cite{Siegel2013}.

\section{Strong Placement Games and Game Values Background}\label{sec:background}

A \textbf{combinatorial game} is a 2-player game with perfect information and no chance devices. We call the two players Left and Right. For our purposes a game will consist of a ruleset, which indicates what moves are available to either player, and a given starting position. A \textbf{position} is an arrangement of pieces on the board. Given a position $P$, if Left (Right) has a move from $P$ to $Q$, then $Q$ is called a Left (Right) \textbf{option} of $P$. The set of all Left options of $P$ is denoted by $P^{\mathcal{L}}$, while the Right options are $P^\mathcal{R}$. A game with starting position $G$ is written in shorthand as $G=\{G^\mathcal{L}\mid G^\mathcal{R}\}$.

A combinatorial game is called \textbf{short} if it only has finitely many positions and ends in a finite amount of time, \ie one cannot return to a previous position. The game ends if the player whose turn it is cannot make a move and under \textbf{normal play} this player looses. All games we consider are short and normal play.

\begin{definition}\label{def:SPgame}
A \textbf{strong placement game} (or \textbf{SP-game}) is a combinatorial game which satisfies the following condition:
\begin{itemize}
\item[(i)] The board is empty at the beginning of the game.
\item[(ii)] Players place pieces on empty vertices of the board according to the rules.
\item[(iii)] Pieces are not moved or removed once placed.
\item[(iv)] The rules are such that if it is possible to reach a position through a sequence of legal moves, then any sequence of moves leading to this position consists of legal moves.
\end{itemize}
\end{definition}

Note that condition (iv) in the above definition implies that the order of moves does not matter, and that the last piece played could have been played at any previous point. The property that positions are independent of the order of moves allows us to represent positions by faces of a simplicial complex.

Two well studied SP-games, which we will be using throughout, are the following rulesets with a suitable board.
\begin{definition}
In \textbf{\Snort} (see \cite{BerlekampCG2004}), players place a piece on a single vertex which is not adjacent to a vertex containing a piece from their opponent.

In \textbf{\Domineering} (see \cite{Berlekamp1988} or \cite{LachmannMR2002}), which is played on grids, both players place dominoes. Left may only place vertically, and Right only horizontally. The vertices of the board are the squares of the grid, and each piece occupies two vertices. 
\end{definition}

For SP-games, since the order of moves taken does not matter, the positions with a single piece played become very important. We thus define a basic position:
\begin{definition}\label{def:basicPosition}
A position with a single piece played, whether this is legal or not, is called a \textbf{basic position}.
\end{definition}

Any position in an SP-game is the union of a finite number of basic positions. For example, positions in \textsc{Snort} played on a path of three vertices break up as follows:

\begin{center}
\resizebox{\linewidth}{!}{
\begin{tikzpicture}[scale=1, vertex/.style={circle, draw, minimum size=6mm, font=\tiny}]
	\node[vertex] (1) at (0,0) {$L$};
	\node[vertex] (2) at (1,0) {};
	\node[vertex] (3) at (2,0) {$R$};
	\draw (1)--(2)--(3);
	\node at (3,0) {$=$};
	\node[vertex] (4) at (4,0) {$L$};
	\node[vertex] (5) at (5,0) {};
	\node[vertex] (6) at (6,0) {};
	\draw (4)--(5)--(6);
	\node at (7,0) {$\cup$};
	\node[vertex] (8) at (8,0) {};
	\node[vertex] (9) at (9,0) {};
	\node[vertex] (10) at (10,0) {$R$};
	\draw (8)--(9)--(10);
	
	\node[vertex] (1a) at (0,1) {$L$};
	\node[vertex] (2a) at (1,1) {$L$};
	\node[vertex] (3a) at (2,1) {$L$};	
	\draw (1a)--(2a)--(3a);
	\node at (3,1) {$=$};
	\node[vertex] (4a) at (4,1) {$L$};
	\node[vertex] (5a) at (5,1) {};
	\node[vertex] (6a) at (6,1) {};
	\draw (4a)--(5a)--(6a);
	\node at (7,1) {$\cup$};
	\node[vertex] (8a) at (8,1) {};
	\node[vertex] (9a) at (9,1) {$L$};
	\node[vertex] (10a) at (10,1) {};
	\draw (8a)--(9a)--(10a);
	\node at (11,1) {$\cup$};
	\node[vertex] (12a) at (12,1) {};
	\node[vertex] (13a) at (13,1) {};
	\node[vertex] (14a) at (14,1) {$L$};
	\draw (12a)--(13a)--(14a);
\end{tikzpicture}}
\end{center}

Many combinatorial games, especially SP-games, have a natural tendency to break up into smaller, independent components as play progresses. For example, after several moves the empty spaces could be split into many disconnected components and a player, on their move, then has to choose a component to move in. From this, we define the \textbf{disjunctive sum} {\boldmath $G_1+G_2$} of two games $G_1$ and $G_2$, namely the game in which at each step the current player can decide to move in either game, but not both. Formally,
\begin{align*}
G_1+G_2=\{G_1^\mathcal{L}+G_2,G_1+G_2^\mathcal{L}\mid G_1^\mathcal{R}+G_2, G_1+G_2^\mathcal{R}\}.
\end{align*}

\begin{example}
This property is especially apparent in \Domineering. Consider for example a $6\times 6$ board. The position on the left below could occur during play. It is equal to the disjunctive sum of several smaller positions (empty boards) given on the right.
\begin{center}
\begin{tikzpicture}[scale=0.5]
	\foreach \x in {0,1,2,3,4,5,6}{
		\draw (\x,0)--(\x,6);}
	\foreach \y in {0,1,2,3,4,5,6}{
		\draw (0,\y)--(6,\y);}
	\filldraw[fill=gray!30, line width=1pt] (0.15,3.15)--(1.85,3.15)--(1.85,3.85)--(0.15,3.85)-- cycle;
	\filldraw[fill=gray!30, line width=1pt] (0.15,4.15)--(1.85,4.15)--(1.85,4.85)--(0.15,4.85)-- cycle;
	\filldraw[fill=gray!30, line width=1pt] (3.15,3.15)--(4.85,3.15)--(4.85,3.85)--(3.15,3.85)-- cycle;
	\filldraw[fill=gray!30, line width=1pt] (3.15,4.15)--(4.85,4.15)--(4.85,4.85)--(3.15,4.85)-- cycle;
	\filldraw[fill=gray!30, line width=1pt] (2.15,1.15)--(3.85,1.15)--(3.85,1.85)--(2.15,1.85)-- cycle;
	\filldraw[fill=gray!30, line width=1pt] (2.15,5.15)--(3.85,5.15)--(3.85,5.85)--(2.15,5.85)-- cycle;
	\filldraw[fill=gray!30, line width=1pt] (1.15,0.15)--(1.85,0.15)--(1.85,1.85)--(1.15,1.85)-- cycle;
	\filldraw[fill=gray!30, line width=1pt] (2.15,3.15)--(2.85,3.15)--(2.85,4.85)--(2.15,4.85)-- cycle;
	\filldraw[fill=gray!30, line width=1pt] (4.15,1.15)--(4.85,1.15)--(4.85,2.85)--(4.15,2.85)-- cycle;
	\filldraw[fill=gray!30, line width=1pt] (5.15,0.15)--(5.85,0.15)--(5.85,1.85)--(5.15,1.85)-- cycle;
	\filldraw[fill=gray!30, line width=1pt] (5.15,2.15)--(5.85,2.15)--(5.85,3.85)--(5.15,3.85)-- cycle;
	\filldraw[fill=gray!30, line width=1pt] (5.15,4.15)--(5.85,4.15)--(5.85,5.85)--(5.15,5.85)-- cycle;
	\node at (7,3) {$=$};
	\foreach \x in {8,9,10,11,12,18,19,20,21}{
		\draw (\x,2.5)--(\x,3.5);}
	\foreach \y in {2.5,3.5}{
		\draw (8,\y)--(10,\y);
		\draw (11,\y)--(12,\y);
		\draw (18,\y)--(21,\y);}
	\draw (14,4.5)--(14,1.5)--(13,1.5)--(13,4.5)--(17,4.5)--(17,3.5)--(13,3.5);
	\draw (13,2.5)--(14,2.5);
	\draw (15,3.5)--(15,4.5);
	\draw (16,3.5)--(16,4.5);
	\foreach \z in {10.5,12.5,17.5}{
		\node at (\z,3) {$+$};}
\end{tikzpicture}
\end{center}
\end{example}

We will implicitly assume that all games we consider are a summand in a disjunctive sum. Due to this, the two players do not necessarily alternate their turns in any one component (in a disjunctive sum, the players can use different components).

We can partition games into four outcome classes. The \textbf{outcome class} {\boldmath $o(G)$} of a combinatorial game $G$ indicates who will win the game when playing optimally. The outcome classes are:
\begin{itemize}
\item $\mathscr{N}$: the first (next) player can force a win;
\item $\mathscr{P}$: the second (previous) player can force a win;
\item $\mathscr{L}$: Left can force a win, no matter who plays first;
\item $\mathscr{R}$: Right can force a win, no matter who plays first.
\end{itemize}
A game whose outcome is in $\mathscr{N}$, that is one in which the first player always has a good move, is also called a first-player win. Similarly, games whose outcomes are in the other classes are called second-player win, Left win, or Right win, respectively.

We have the partial order on the outcome classes as in \cref{fig:OrderOutcomeClasses}.

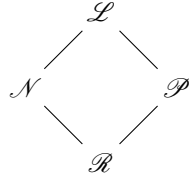
\begin{figure}[!ht]
\begin{center}
\begin{tikzpicture}
	\node (L) at (0,0) {$\mathscr{L}$};
	\node (N) at (-1,-1) {$\mathscr{N}$}
		edge (L);
	\node (P) at (1,-1) {$\mathscr{P}$}
		edge (L);
	\node (R) at (0,-2) {$\mathscr{R}$}
		edge (N)
		edge (P);
\end{tikzpicture}
\end{center}
\caption{Partial Order of Outcome Classes}
\label{fig:OrderOutcomeClasses}
\end{figure}

We say two games $G_1$ and $G_2$ are \textbf{equal}\label{def:equal} and write {\boldmath $G_1=G_2$} if $o(G_1+H)=o(G_2+H)$ for all games $H$. Game equality is an equivalence relation. The equivalence class of a game $G$ under ``$=$'' is called its \textbf{game value}. The group of all possible game values of short games under normal play with disjunctive sum as operation is denoted as $\mathbf{\G}$\label{groupG}.

For a game $G$, we say that the \textbf{negative} {\boldmath $-G$}\label{def:negative} is the game recursively defined as
	\[-G=\{-G^\mathcal{R}\mid -G^\mathcal{L}\},\]
\ie the game in which the roles of Left and Right are reversed. For example, in \domineering this is equivalent to rotating the board by $90^\circ$, while in \snort it is equivalent to switch colours of all tokens already placed. We use {\boldmath $G-H$} as shorthand for $G+(-H)$.

Similar to equality, we can also define inequalities: We say that {\boldmath $G_1>G_2$}\label{def:greater} if $o(G_1+H)> o(G_2+H)$ for all games $H$, with the partial order on the outcome classes as in \cref{fig:OrderOutcomeClasses}. Two games are incomparable, denoted {\boldmath $G_1\incomp G_2$},\label{def:incomp} if their outcome classes are incomparable, \ie if one is a first-player win and the other a second-player win. Similarly defined are $G_1\geq G_2$ and $G_1\leq G_2$.

Under normal play, we are able to determine the relationship between two games by simply determining the outcome class of their difference. Particularly, we have that $G=H$ if and only if $o(G-H)=\mathscr{P}$.

Simplicial complexes are one of the main constructions we use to study SP-games. 

\begin{definition}\label{def:simplicialComplex}
A \textbf{simplicial complex} {\boldmath $\Delta$} on a finite vertex set $V$ is a set of subsets of $V$, called \textbf{faces}, with the condition that if $A\in \Delta$ and $B\subseteq A$, then $B\in \Delta$. The \textbf{facets} of a simplicial complex $\Delta$ are the maximal faces of $\Delta$ with respect to inclusion. The \textbf{dimension of a face} is one less than the number of vertices of that face. The \textbf{dimension} {\boldmath $\dim(\Delta)$} of a simplicial complex $\Delta$ is the maximum dimension of any of its faces.
\end{definition}

Note that a simplicial complex with a fixed vertex set is uniquely determined by its facets. Thus a simplicial complex $\Delta$ with facets $F_1,\ldots, F_k$ is denoted by $\Delta=\langle F_1,\ldots, F_k\rangle$.

If a simplicial complex is of the form $\Delta=\langle \{x_1,x_2,\ldots, x_n\}\rangle$, where $\{x_1,x_2,\ldots, x_n\}$ is the vertex set of $\Delta$, we call it a \textbf{simplex}. A simplicial complex whose facets all have the same size is called \textbf{pure}.

We can assign a simplicial complex $\Delta_G$, called the \textbf{legal complex}, to each SP-game $G$ as follows. First assign variables to each basic position, using $x_1,\ldots,x_m$ for Left basic positions and $y_1,\ldots,y_n$ for Right basic positions. The vertex set of $\Delta_G$ is $\{x_1,\ldots,x_m, y_1,\ldots,y_n\}$. A subset of the vertex set forms a face if and only if the union of the corresponding basic positions forms a legal position in $G$. This construction implies that the facets of $\Delta_G$ correspond to the maximal legal positions.

\begin{example}\label{ex:Col2}
Consider \snort played on the path $P_3$. We label the spaces of the board as below.
	\begin{center}
	\begin{tikzpicture}[scale=0.5]
		\draw (0, 0) -- (3, 0) -- (3, -1) -- (0, -1) -- (0, 0);
		\draw (1, 0) -- (1, -1);
		\draw (2, 0) -- (2, -1);
		\draw (0.5, -0.5) node {1};
		\draw (1.5, -0.5) node {2};
		\draw (2.5, -0.5) node {3};
	\end{tikzpicture}
	\end{center}

The vertex set of $\Delta_{\Snort,P_3}$ is then $\{x_1,x_2,x_3,y_1,y_2,y_3\}$ and \[\Delta_{\Snort,P_3}=\langle \{x_1,y_2,x_3\}, \{y_1,x_2,y_3\}, \{x_1,y_3\}, \{x_3,y_1\}\rangle,\] 
with the listed facets corresponding to the maximal legal positions.
\end{example}

It was shown in \cite{FaridiHN2019c} that in turn given a simplicial complex $\Gamma$ with a fixed partition of the vertices into Left and Right sets, one can find an SP-game $G$ such that $\Gamma=\Delta_G$. This allows us to construct a simplicial complex instead of an SP-game directly when showing the existence of a game value. The partition of the vertex set will also be given implicitly by labels as $x_i$s or $y_j$s.

One construction of a new simplicial complex from a given one we will use is the link.
Given a simplicial complex $\Delta$ and a face $F$ of $\Delta$, the \textbf{link of \boldmath $F$ in $\Delta$}, denoted {\boldmath $\link_\Delta F$}, is defined as the subcomplex of $\Delta$ given by \[\link_\Delta F=\{G\in\Delta\mid  F\cap G=\emptyset, F\cup G\in\Delta\}.\]
In particular, if $F=\{v\}$ is a single vertex, then
\[\link_\Delta v=\{G\in\Delta\mid v\not\in G, \{v\}\cup G\in\Delta\}.\]
\begin{remark}\label{remark:optionsLink}
In SP-games, each move corresponds to a basic position, thus making a move is essentially the same as claiming a vertex in the legal complex. Furthermore, since a position is only legal if all corresponding vertices form a face, only those faces containing it are relevant from now on. This means that if we consider a game with legal complex $\Delta=\langle \{v\}\cup F_1, \{v\}\cup F_2,\ldots, \{v\}\cup F_k, F_{k+1}, \ldots, F_j\rangle$ where $F_{k+1},\ldots, F_j$ do not contain $v$, then making the move corresponding to the vertex $v$ is to a position equivalent to the game with legal complex $\Delta'=\langle F_1,F_2,\ldots, F_k\rangle$, which is the link of $v$ in $\Delta$. From here on, we will often say that a move is to $\Delta'$ when we mean the move equivalent to claiming the vertex $v$.
\end{remark}

\begin{remark}
Since the negative of a game switches Left and Right options, the legal complex of the negative of an SP-game is obtained by switching the vertices belonging to $\L$ and $\R$. Due to this, we will in this paper not demonstrate the existence of negative games, but rather assume their existence once the existence of their positive counterpart has been shown.
\end{remark}

We will also take advantage of the structure of the legal complex of a disjunctive sum of SP-games. Given two simplicial complexes $\Delta=\langle F_1, F_2,\ldots, F_i\rangle$ and $\Delta'=\langle F_1',F_2',\ldots, F_j'\rangle$ their \textbf{join} is defined as
\[\Delta *\Delta'=\langle F_1\cup F_1', F_1\cup F_2',\ldots, F_1\cup F_j',\ldots, F_i\cup F_1', F_i\cup F_2',\ldots, F_i\cup F_j'\rangle.\]
\begin{proposition}\label{thm:disjunctiveStructure}
Let $(R,B)$ and $(R',B')$ be two SP-games with legal complexes $\Delta_{R,B}$ and $\Delta_{R',B'}$. Then
\[\Delta_{(R,B)+(R',B')}=\Delta_{R,B} *\Delta_{R',B'}\]
is the legal complex of the disjunctive sum $(R,B)+(R',B')$.
\end{proposition}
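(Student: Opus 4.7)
The plan is to identify the vertex set of $\Delta_{(R,B)+(R',B')}$ and then verify that a subset of that vertex set is a face if and only if it decomposes as $F\cup F'$ with $F$ a face of $\Delta_{R,B}$ and $F'$ a face of $\Delta_{R',B'}$. This is precisely the definition of the join $\Delta_{R,B}*\Delta_{R',B'}$.

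First I would set up notation. The basic positions of the disjunctive sum $(R,B)+(R',B')$ are exactly the basic positions of $(R,B)$ together with those of $(R',B')$, since a single piece is placed on exactly one of the two boards. Hence the vertex set of $\Delta_{(R,B)+(R',B')}$ is the disjoint union of the vertex sets of $\Delta_{R,B}$ and $\Delta_{R',B'}$, and every subset $S$ of this union splits uniquely as $S=F\sqcup F'$ with $F$ from the first board and $F'$ from the second.

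The key step is to show that the position corresponding to $F\cup F'$ is legal in $(R,B)+(R',B')$ if and only if the position corresponding to $F$ is legal in $(R,B)$ and the position corresponding to $F'$ is legal in $(R',B')$. For the forward direction, I would invoke condition (iv) of \cref{def:SPgame}: if $F\cup F'$ arose from some sequence of legal moves in the sum, then one may reorder those moves so that all moves on the first board are played first and all moves on the second board afterwards. Since moves on the first board are unaffected by the empty second board (and vice versa), this produces legal play sequences in each summand individually, so $F$ and $F'$ are faces of the respective legal complexes. For the reverse direction, any legal play sequences producing $F$ in $(R,B)$ and $F'$ in $(R',B')$ may be concatenated, yielding a legal play sequence in the sum that produces $F\cup F'$.

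Once this equivalence is established, it follows directly that the faces of $\Delta_{(R,B)+(R',B')}$ are exactly the sets of the form $F\cup F'$ with $F\in\Delta_{R,B}$ and $F'\in\Delta_{R',B'}$, and in particular the facets are the unions of a facet from each complex, which matches the definition of $\Delta_{R,B}*\Delta_{R',B'}$ given just before the statement. The only subtle step is the bi-directional use of condition (iv) to decouple legality across the two boards; everything else is bookkeeping about vertex sets and facets.
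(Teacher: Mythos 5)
Your proof is correct and follows essentially the same route as the paper's: legality in the disjunctive sum decouples across the two boards, so legal positions (and in particular maximal legal positions) of the sum are exactly unions of legal positions of the summands. The paper states this only at the level of facets in a one-line argument, whereas your version verifies the face-level equivalence and justifies the reordering of moves via condition (iv); this is a more careful elaboration of the same idea rather than a different approach.
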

\begin{proof}
A maximal legal position in the game $(R,B)+(R',B')$ is one where both the pieces placed in $(R,B)$ and the ones placed in $(R',B')$ form maximal legal positions. Thus a facet in the legal complex of $(R,B)+(R',B')$ is a union of a facet of $\Delta_{R,B}$ and a facet of $\Delta_{R',B'}$.
\end{proof}

\begin{remark}\label{remark}
As a consequence of \cref{thm:disjunctiveStructure}, if we show that two game values are taken on by SP-games, their disjunctive sum is also taken on.
\end{remark}

In combinatorial game theory, the game tree of a game is often used to study properties.
The \textbf{game tree} {\boldmath $T_G$} of a combinatorial game $G$ is a diagram constructed inductively as follows:
\begin{itemize}
\item[Step 0:] Place a vertex representing the starting position of $G$.
\item[Step $k$:] For each vertex $v$ representing a position $P$ constructed in step $k-1$ do the following: For each Left option of $P$ place a vertex $v_P$ below and to the left of $v$ and connect $v$ and $v_P$ with an edge (thus with positive slope, or oriented to the left), and similarly for all Right options.
\end{itemize}


Two games $G_1$ and $G_2$ with isomorphic game trees are called \textbf{literally equal}\label{def:isomorphic}, written as {\boldmath $G_1\cong G_2$}. 

We define $0$ to be the game $\{\emptyset\mid\emptyset\}$, so the game in which neither player has any available moves. Adding $0$ to any other games does not change it. We thus have for all games $G$, $o(G)=\mathscr{P}$ if and only if $G=0$ and we will use this fact throughout the paper to demonstrate when a game is 0.

When either of the set of options is empty, we will often leave that side of the braces empty. Thus we can also write $0=\{\,\mid\,\}$.

There are two simplifications we can use on games while still remaining in the same equivalence class. The first is to remove so-called dominated options, \ie the ones where another option is clearly preferred.
Formally, given a game $G$, a Left option $G^{L_1}$ is \textbf{dominated} by the Left option $G^{L_2}$ if $G^{L_2}\ge G^{L_1}$. Similarly, a Right option $G^{R_1}$ is dominated by the Right option $G^{R_2}$ if $G^{R_2}\le G^{R_1}$.
If for a given game $G$ the Left option $G^{L_1}$ is dominated by some other Left option, then
\[G=\{G^{\mathcal{L}}\mid G^\mathcal{R}\}=\{G^\mathcal{L}\setminus G^{L_1}\mid G^\mathcal{R}\}.\]
Similarly for Right dominated options.

For example, when playing \domineering on an L-shaped board, the game is
\begin{center}
\begin{tikzpicture}[scale=0.5]
\draw (0,0)--(0,3)--(1,3)--(1,1)--(2,1)--(2,0)--cycle;
\draw (0,1)--(1,1)--(1,0);
\draw (0,2)--(1,2);
\node at (3,1.5) {$=$};
\draw[decorate,decoration={brace,amplitude=6pt}] (4,0) -- (4,3);
\draw (5,0)--(5,3)--(6,3)--(6,1)--(7,1)--(7,0)--cycle;
\draw (5,1)--(6,1)--(6,0);
\draw (5,2)--(6,2);
\Ldomino{5}{1}
\node at (7.5,1) {,};
\draw (8,0)--(8,3)--(9,3)--(9,1)--(10,1)--(10,0)--cycle;
\draw (8,1)--(9,1)--(9,0);
\draw (8,2)--(9,2);
\Ldomino{8}{0}
\draw (11,0)--(11,3);
\draw (12,0)--(12,3)--(13,3)--(13,1)--(14,1)--(14,0)--cycle;
\draw (12,1)--(13,1)--(13,0);
\draw (12,2)--(13,2);
\Rdomino{12}{0}
\draw[decorate,decoration={brace,amplitude=6pt,mirror}] (15,0) -- (15,3) ;
\end{tikzpicture}
\end{center}
The Left option in the upper two squares is a Right win (only Right has a move), while the Left option in the lower two squares, having no remaining moves, is a second-player win. Thus the latter option is greater than the former, and the option in the upper two squares is dominated and can thus be removed without changing the game value. This is also apparent as Left would never make this move which gives her opponent an advantage. 

The second simplification is to replace reversible options, which are in some sense those options which have a guaranteed response. Formally, 
given a game $G$, a Left option $G^{L_1}$ is \textbf{reversible} through $G^{L_1R_1}$ if $G^{L_1R_1}\le G$. Similarly, a Right option $G^{R_1}$ is reversible through $G^{R_1L_1}$ if $G^{R_1L_1}\ge G$.
If for a given game $G$ the Left option $G^{L_1}$ is reversible through $G^{L_1R_1}$, then
\[G=\{G^\mathcal{L}\mid G^\mathcal{R}\}=\{G^\mathcal{L}\setminus G^{L_1}, G^{L_1R_1\mathcal{L}}\mid G^\mathcal{R}\}.\]
Similarly for Right options that are reversible.
Note, this is still true if $G^{L_1R_1\mathcal{L}}$ is empty.

Reversibility is unfortunately not as easy to see as domination as it requires comparing options of an option with the game itself. There will be a few cases later on where reversibility is used as a simplification. In those cases, we have used the combinatorial game theory program CGSuite \cite{CGSuite} to find the reversible options.

We say that a game is in \textbf{canonical form} if it has no dominated or reversible options. A game in canonical form in some sense is the simplest game in its equivalence class. Even more, there is only one game in canonical form in each equivalence class, so that we can talk about \emph{the} canonical form of a game.

For example, the game of \domineering on the L-shaped board above, after removing the dominated option, has the canonical form $\{0\mid\{0\mid\,\}\}$. There are no reversible options in this case.

Note that the canonical form of an SP-game is not necessarily an SP-game itself. We give an example demonstrating this in \cref{sec:ValuesFurther}.

When talking about game values, we will often represent a game value by its unique representative that is in canonical form.

As before, the game value with canonical form $\{\,\mid\,\}$ is called 0 as neither player has a move. The game $\{0\mid\,\}$ is called 1 as Left has a guaranteed move, while $\{\,\mid 0\}$ is called $-1$. The game $\{1\mid\,\}$ is then called 2 (two guaranteed moves for Left), and we can continue recursively to define all integers.

The game $\{0\mid 1\}$ is a slight advantage to Left, but not quite as much as 1 since Right actually does have a move. It turns out though that $\{0\mid 1\}+\{0\mid 1\}=1$. Thus we call this game $\displaystyle\frac{1}{2}$. Recursively, we then set $\displaystyle\{0\mid\frac{1}{2^{n-1}}\}=\frac{1}{2^n}$.

Addition of the integers and fractions as above turn out to work as in the rationals. For example $1+2=\{0\mid\,\}+\{1\mid\,\}=\{2\mid\,\}=3$.

\begin{remark}
Although all fractions can be found in combinatorial games, a short game can only take on a fraction whose denominator is a power of 2 (a dyadic rational) (see \cite[Corollary II.3.11]{Siegel2013}). The set of dyadic rationals is indicated by $\mathbb{D}$.\label{def:dyadic}
\end{remark}

Below is a list summarizing the definitions of the integers and dyadic rationals, as well as several other game values that often appear in combinatorial game theory and are therefore given shorthand notation.

\begin{itemize}
\item \textbf{Integers}: For zero we have $0=\{\,\mid\,\}$ and the other integers are recursively defined as $n=\{n-1\mid\,\}$ for $n>0$ and $n=\{\,\mid n+1\}$ for $n<0$.
\item \textbf{Fractions}: Unit fractions are recursively defined as $\displaystyle\frac{1}{2^n}=\left\{0\,\Big\vert \,\frac{1}{2^{n-1}}\right\}$. Other fractions are sums of these games.
\item \textbf{Numbers}: A game whose value is either an integer or a fraction is called a number.
\item \textbf{Switches}: A game with canonical form $\{a\mid b\}$, where $a\geq b$ are numbers, is called a switch and is written $\displaystyle \frac{a+b}{2}\pm\frac{a-b}{2}$.
\item \textbf{Nimbers}: Nimbers are recursively defined as $*1=\{0\mid 0\}$ (shorthand $*$) and $* n=\{0,*, *2,\ldots, *(n-1)\mid 0,*, *2,\ldots, *(n-1)\}$. Note that for recursive purposes we often also set $*0=0$.
\item \textbf{Up} and \textbf{down}: We have up as $\uparrow\, =\{0\mid *\}$ and down as $\downarrow \,=-\uparrow$.
\item \textbf{Tiny} and \textbf{miny}: For $G\ge 0$ a game, we have tiny-$G$ as $+_G=\{0\mid\{0\mid -G\}\}$ and miny-$G$ as $-_G=-(+_G)$.
\end{itemize}
Note that disjunctive sums of numbers, nimbers, ups, and tinies are often shortened and the `$+$' omitted. To avoid confusion between the sum $2+*$ and the nimber $*2$ for example, we will observe the order number, then up (or down), then nimbers and tinies. For example $2+*+\frac{1}{2}\,+\downarrow$ will be written as $2\frac{1}{2}\downarrow *$. If we are writing a product, such as $\uparrow+\uparrow+\uparrow$, we will use a centre dot, \ie $3\,\cdot\uparrow$.

Note that if $G=\{a\mid b\}$ where $a$ and $b$ are numbers and $a<b$, then $G$ is a number, and we have
\[G=\begin{cases}
	n & \text{if } a-b>1 \text{ and }n\text{ is the integer closest to zero such that }a<n<b;\\
	\frac{p}{2^q} & \text{if } a-b\le 1 \text{ and }q\text{ is the smallest positive integer such that }\\
	&\qquad\text{there exists a }p\text{ such that }a<\frac{p}{2^q}<b.
	\end{cases}\]
We will use this fact at times when showing that a game is a number.

\begin{example}
As examples for several of these values, we will consider \domineering positions under normal play.

(a)
\begin{center}
\begin{tikzpicture}[scale=0.5]
\draw (0,0)--(0,1)--(1,1)--(1,0)--cycle;
\node at (4,0.5) {$=\{\;\mid\;\}=0$};
\end{tikzpicture}
\end{center}

(b)
\begin{center}
\begin{tikzpicture}[scale=0.5]
\draw (0,0)--(0,2)--(1,2)--(1,0)--cycle;
\draw (0,1)--(1,1);
\node at (4,1) {$=\{0\mid\;\}=1$};
\end{tikzpicture}
\end{center}
and to get the negative we rotate the board:
\begin{center}
\begin{tikzpicture}[scale=0.5]
\draw (0,0)--(2,0)--(2,1)--(0,1)--cycle;
\draw (1,0)--(1,1);
\node at (5,0.5) {$=\{\;\mid 0\}=-1$};
\end{tikzpicture}
\end{center}

(c)
\begin{center}
\begin{tikzpicture}[scale=0.5]
\draw (0,0)--(0,2)--(1,2)--(1,0)--cycle;
\draw (0,0)--(2,0)--(2,1)--(0,1)--cycle;
\node at (5,1) {$=\{0\mid 0\}=*$};
\end{tikzpicture}
\end{center}

(d)
\begin{center}
\begin{tikzpicture}[scale=0.5]
\draw (0,0)--(0,2)--(2,2)--(2,0)--cycle;
\draw (0,1)--(2,1);
\draw (1,0)--(1,2);
\node at (3,1) {$=\left\{\right.$};
\draw (4,0)--(5,0)--(5,2)--(4,2)--cycle;
\draw (4,1)--(5,1);
\node at (6,1) {$\mid$};
\draw (7,0.5)--(9,0.5)--(9,1.5)--(7,1.5)--cycle;
\draw (8,0.5)--(8,1.5);
\node at (13,1) {$\left.\right\}=\{1\mid -1\}=\pm 1$};
\end{tikzpicture}
\end{center}

(e)
\begin{center}
\begin{tikzpicture}[scale=0.5]
\draw (0,0)--(0,3)--(1,3)--(1,0)--cycle;
\draw (0,2)--(1,2);
\draw (0,0)--(2,0)--(2,1)--(0,1)--cycle;
\node at (3,1.5) {$=\left\{\right.$};
\draw (4,1)--(6,1)--(6,2)--(4,2)--cycle;
\draw (5,1)--(5,2);
\node at (7,1.5) {$,0\mid$};
\draw (8,0.5)--(8,2.5)--(9,2.5)--(9,0.5)--cycle;
\draw (8,1.5)--(9,1.5);
\node at (12,1.5) {$\left.\right\}=\{0\mid 1\}=\frac{1}{2}$};
\end{tikzpicture}
\end{center}

Note that the Left option to $-1$ is dominated by the option to $0$, so can be ignored.

\newpage
(f)
\begin{center}
\begin{tikzpicture}[scale=0.5]
\draw (0,0)--(0,4)--(1,4)--(1,0)--cycle;
\draw (0,2)--(2,2)--(2,3)--(0,3)--cycle;
\draw (-1,1)--(1,1)--(1,2)--(-1,2)--cycle;
\node at (3,2) {$=\left\{\right.$};
\draw (4,3)--(6,3)--(6,2)--(4,2)--cycle;
\draw (5,1)--(5,3)--(6,3)--(6,1)--cycle;
\node at (7,2) {$,0\mid$};
\draw (8,3)--(10,3)--(10,2)--(8,2)--cycle;
\draw (9,1)--(9,3)--(10,3)--(10,1)--cycle;
\node at (15,2) {$\left.\right\}=\{*,0\mid *\}=\{0\mid *\}=\uparrow$};
\end{tikzpicture}
\end{center}

Note that we have only listed one of each of Left and Right's options to $*$, and that Left's option to $*$ is reversible, thus gets replaced with the Left options of 0, the empty set.
\end{example}

\begin{definition}\label{def:ValueSet}
The \textbf{value set} {\boldmath $\V$} of SP-games is the set of all possible values SP-games can exhibit under normal play, \ie all equivalence classes that contain an SP-game. It is the set $\G$ restricted to SP-games only.
\end{definition}

The question of interest is what the set $\V$ looks like. In the remainder, smaller examples have been calculated by hand. For larger examples, we have used the computer algebra program Macaulay2 \cite{M2} to construct the game in bracket and slash notation from its legal complex, which has then been put into the combinatorial game theory program CGSuite \cite{CGSuite} to obtain the canonical form.

We will show that all numbers, all nimbers, many switches, and many tinies are possible game values of SP-games, as well as that all games with small game tree are equal to some SP-game. We will at times also discuss how certain structures of the legal complex restrict possible game values. The universality of SP-games still remains open though.


\section{Small Birthdays}\label{sec:ValueBirthday}

In this section we will consider game values whose canonical forms have small game trees. For this, we will take advantage of the recursive construction of games:

The set of all short games $\widetilde{\G}$ can be defined as
\[\widetilde{\G}=\bigcup_{n\ge 0}\widetilde{\G}_n,\]
where $\widetilde{\G}_0=\{0\}$ and for $n\ge 0$
\[\widetilde{\G}_{n+1}=\{\{A\mid B\}: A,B\subseteq \widetilde{\G}_n\}.\]
If we let $\G_n$ be the set of values of elements of $\widetilde{\G}_n$, then the \textbf{birthday} {\boldmath $\birth (G)$} of a game $G$ is the least $n$ such that the game value of $G$ is in $\G_n$. Similarly, the \textbf{formal birthday} {\boldmath $\birthformal (G)$} is the least $n$ such that the literal form of $G$ is in $\widetilde{\G}_n$.

Note in particular that the birthday of a game is related to its game value, while the formal birthday relates to the first appearance of its literal form in the recursive construction of games.

The height of a game tree is the maximum number of moves from the starting position to an ending position. The elements in $\widetilde{\G}_n$ are precisely those games whose game trees have height $n$. We thus have that
the formal birthday of a game is equal to the height of its game tree.

Given this and that the height of the game tree of an SP-game $G$ is equal to the size of the largest facet in the legal complex, we have the following proposition.
\begin{proposition} Given an SP-game $G$ we have
\[\birthformal(G)=\dim(\Delta_{G})+1.\]
\end{proposition}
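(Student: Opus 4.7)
The plan is to chain together the three equalities $\birthformal(G) = \mathrm{height}(T_G) = |F_{\max}| = \dim(\Delta_G)+1$, where $F_{\max}$ denotes a largest-cardinality facet of $\Delta_G$. Each of the three links has essentially been set up in the paragraphs just before the proposition, so the proof should be short; the only thing that needs honest checking is the middle equality.

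First I would recall the purely definitional fact $\birthformal(G) = \mathrm{height}(T_G)$: by induction on $n$, the games in $\widetilde{\G}_n$ are precisely those whose game tree has height at most $n$, so the least $n$ for which $G \in \widetilde{\G}_n$ is exactly the height of $T_G$. This is stated in the excerpt just above the proposition and needs no further argument.

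Next, and this is the one substantive step, I would show that the height of $T_G$ equals $|F_{\max}|$. By \cref{remark:optionsLink} every move in an SP-game is the act of claiming a single vertex of $\Delta_G$, so a root-to-leaf path of length $\ell$ in $T_G$ corresponds to a sequence $v_1,\ldots,v_\ell$ of distinct vertices whose successive unions $\{v_1\},\{v_1,v_2\},\ldots,\{v_1,\ldots,v_\ell\}$ are all faces of $\Delta_G$, and such that $\{v_1,\ldots,v_\ell\}$ cannot be enlarged within $\Delta_G$ (otherwise the losing player would still have a legal move). Hence such a path terminates in a facet, giving $\mathrm{height}(T_G) \le |F_{\max}|$. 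Conversely, given any facet $F = \{v_1,\ldots,v_k\}$, condition (iv) of \cref{def:SPgame} says every ordering of these vertices is a legal play sequence, so each facet is realised as the leaf of some root-to-leaf path in $T_G$, giving $\mathrm{height}(T_G) \ge |F_{\max}|$. This part is the main (mild) obstacle, since it is the only place one actually uses the defining property of SP-games rather than just quoting prior remarks.

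Finally, by the definition of the dimension of a simplicial complex, $|F_{\max}| = \dim(\Delta_G)+1$, and composing the three equalities yields $\birthformal(G) = \dim(\Delta_G)+1$.
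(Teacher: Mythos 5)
Your proof follows exactly the same chain the paper uses: the paper derives the proposition by combining ``formal birthday equals game-tree height'' with ``game-tree height of an SP-game equals the size of the largest facet,'' both asserted in the prose immediately preceding the statement, and then converting facet size to dimension. Your write-up is correct and simply supplies the justification for the middle equality that the paper leaves implicit, so it matches the paper's argument.
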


To illustrate the difference between the birthday and formal birthday, we will give a short example.
\begin{example}
Consider \domineering played on a $2\times 5$ grid. The maximal legal positions contain up to $5$ pieces. Thus the legal complex has dimension 4, giving that the formal birthday is $5$. This game has canonical form $\frac{1}{2}$ though, which is contained in $\G_2$, giving a birthday of $2$.
\end{example}

Motivated by the relationship between the formal birthday of an SP-game and the dimension of its legal complex we define the following sets for SP-games:
\begin{definition}\label{def:SPformalBD}
Let
\[\widetilde{\V}_n=\{G\mid G\text{ is an SP-game and }\dim(\Delta_{G})\leq n-1\},\]
where the elements of $\widetilde{\V}_n$ are in literal forms. We set $\V_n$ to be the set of values in $\widetilde{\V}_n$, \ie the set of game values of SP-games whose legal complexes have dimension at most $n-1$. 
\end{definition}

Note that we have chosen $\widetilde{\V}_n$ to be the set of SP-games with $\dim(\Delta_{G})\leq n-1$ rather than equal to $n-1$ to more closely resemble $\widetilde{\G}_n$. As empty subsets are possible in the recursive construction of $\widetilde{\G}$ the depth of the game tree does not necessarily increase with the recursion, \ie $\widetilde{\G}_n$ is the set of games with game tree depth up to $n$.

Further note that with the above notation we have \[\V=\bigcup_{n\ge 0}\V_n,\] as well as $\V_n\subseteq\G_n$.

When studying the structure of $\V$, a natural start is to consider whether for small $n$ the values born by day $n$ all come from SP-games born by day $n$, \ie if
\[\V_n=\G_n,\]
and if the set of games values of SP-games with $\dim(\Delta_{G})=n-1$ are precisely those values born on day $n$, \ie if 
\[\V_{n}\setminus\V_{n-1}=\G_n\setminus\G_{n-1}.\] We will do so for $n=0,1,2$.


\subsection{Formal Birthday 0}
Consider an SP-game $G$ with $\birthformal (G)=0$. Then the legal complex has dimension $-1$, \ie $\Delta_{G}=\emptyset$. Thus in $G$ neither Left nor Right have moves, so that $G=\{\,\mid\,\}=0$, which gives $\V_0=\{0\}$. Since $\G_0=\{0\}$, we thus have $\V_0=\G_0$.

\subsection{Formal Birthday 1}
Now consider an SP-game $G$ with formal birthday 1, so that the legal complex $\Delta_{G}$ has dimension $0$, \ie only consists of isolated vertices.
\begin{itemize}
\item If all vertices belong to $\L$, then Right has no moves while Left can move to the empty game. Thus $G=\{0\mid\,\}=1$.
\item Similarly, if all vertices belong to $\R$, then $G=\{\,\mid 0\}=-1$.
\item If both $\L$ and $\R$ are non-empty, then both Left and Right have moves to the empty game. Thus $G=\{0\mid 0\}=*$.
\end{itemize}
Since $\G_1=\{0,*,1,-1\}$, we thus have $\V_1=\V_0\cup\{*,1,-1\}=\{0,*,1,-1\}=\G_1$ and $\V_1\setminus\V_0=\G_1\setminus\G_0$.

\subsection{Formal Birthday 2}\label{sec:valueBirthday2}

First note that 
\begin{align*}
\G_2&=\Big\{0,*,*2,\pm 1,\uparrow, \downarrow, \uparrow *, \downarrow *, \{1\mid 0,*\}, \{0,*\mid -1\},\frac{1}{2}, -\frac{1}{2}, \{1\mid *\}, \{*\mid -1\},\\
&\qquad\qquad\frac{1}{2}\pm\frac{1}{2}, -\frac{1}{2}\pm\frac{1}{2},1, -1,1*, -1*,2, -2\Big\}
\end{align*}

Now consider an SP-game $G$ whose legal complex has dimension 1, \ie it is a graph. Thus $G$ is born by day $2$. Note that as previously mentioned as soon as we have shown a positive value exists, we assume to have shown the existence of the negative as well (through switching the bipartition).
\begin{itemize}
\item If all vertices belong to $\L$, then Left can move to a single vertex belonging to $\L$, \ie to the game $1$. Thus $G=\{1\mid\,\}=2$.
\item If $\Delta_{G}=\langle \{x_1,y_1\}\rangle$, then Left can move to $\langle\{y_1\}\rangle$, \ie $-1$. Similarly for Right, thus $G=\{-1\mid 1\}=0$.
\item If $\Delta_{G}=\langle \{x_1,x_2\}, \{x_1,y_1\}\rangle$, then $G=\{1,*\mid 1\}$. The Left option to $*$ is reversible and gets replaced with the empty set, thus $G=\{1\mid 1\}=1*$.
\item If $\Delta_{G}=\langle \{x_1,x_2\}, \{y_1,y_2\}\rangle$, then $G=\{1\mid -1\}=\pm 1$.
\item If $\Delta_{G}=\langle \{x_1,x_2\}, \{x_2,y_1\}, \{y_1,y_2\}, \{y_2,x_3\}\rangle$, then $G=\{1, *, -1\mid *\}=\{1\mid *\}$.
\item If $\Delta_G=\langle \{x_1,x_2\}, \{y_1\}\rangle$, then $\displaystyle G=\{1\mid 0\}=\frac{1}{2}\pm\frac{1}{2}$.
\item  If $\Delta_{G}=\langle \{x_1,y_1\}, \{x_2\}\rangle$, then $\displaystyle G=\{-1,0\mid 1\}=\{0\mid 1\}=\frac{1}{2}$.
\item  If $\Delta_{G}=\langle \{x_1,y_1\}, \{x_2\}, \{y_2\}\rangle$, then $G=\{-1,0\mid 1,0\}=\{0\mid 0\}=*$.
\item If $\Delta_{G}=\langle \{x_1,y_1\}, \{y_1,y_2\}, \{y_2,x_2\}, \{x_2,x_3\}, \{x_3,y_3\}, \{x_4\}, \{y_4\}\rangle$, then we have ${G=\{-1,*,0\mid *,1,0\}=\{0,* \mid 0,*\}=*2}$.
\item If $\Delta_{G}=\langle \{x_1,y_1\},\{y_1,y_2\},\{y_2,x_2\}, \{x_2,x_3\}, \{x_3,y_3\}, \{x_4\}\rangle$, then\\
$G={\{-1,*,0\mid *,1\}}=\{0\mid *\}=\uparrow$.
\item If $\Delta_{G}=\langle \{x_1,x_2\}, \{x_2,y_1\}, \{y_1,y_2\}, \{y_2,x_3\}, \{y_3\}\rangle$, then $G=\{1,*,-1 \mid *,0\}=\{1 \mid 0,*\}$.
\item If $\Delta_{G}=\langle \{x_1,y_1\}, \{x_1,x_2\}, \{x_2,y_1\}, \{y_2\}, \{x_3\}\rangle$, then $G=\{*,0\mid 1,0\}=\{0,* \mid 0\}=\uparrow *$.
\end{itemize}

The values 1 and $-1$ are not possible if the legal complex has dimension 1 (see \cref{thm:ValueIntDimImpossible}). As these are elements of $\V_1$ though, we have shown that $\V_2=\G_2$. We also have $\V_2\setminus V_1=\G_2\setminus \G_1$.

\vspace{1em}

Although it seems reasonable to next look at whether all values of other birthdays are possible, the size of $\G_3$ alone is 1474.
We will thus turn to more general existence results independent of the birthday.

\section{Specific Values of Partizan SP-Games}\label{sec:values}
\subsection{Integers}
We will begin by showing that all positive integers (and thus also the negatives) are possible values of SP-games.
\begin{proposition}\label{thm:valueInteger}
Let $G$ be an SP-game with legal complex the simplex $\Delta_{G}=\langle \{x_1,\ldots,x_n\}\rangle$ with $n\ge 0$. Then $G=n$.
\end{proposition}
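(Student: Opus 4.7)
The plan is to proceed by induction on $n$, using \cref{remark:optionsLink} to identify the options of $G$ with the games associated to links in $\Delta_G$.

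For the base case $n=0$, the legal complex is empty (the simplex $\langle\emptyset\rangle$ has no vertices), so neither player has a basic position to claim, and hence $G=\{\,\mid\,\}=0$, matching the recursive definition of the integer $0$ given earlier.

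For the inductive step, assume the result holds for the simplex on $n-1$ vertices. Since all vertices of $\Delta_G$ are labelled $x_i$, they are Left basic positions, so Right has no moves in $G$. Left can claim any one of the $n$ vertices $x_i$; by \cref{remark:optionsLink}, the resulting position is the game with legal complex $\link_{\Delta_G} x_i$. A direct check shows that the link of any vertex in a simplex is the simplex on the remaining vertices, so $\link_{\Delta_G} x_i = \langle\{x_1,\ldots,\hat{x_i},\ldots,x_n\}\rangle$, which by induction has value $n-1$. Therefore
\[G=\{\underbrace{n-1,\,n-1,\,\ldots,\,n-1}_{n\text{ copies}}\mid\,\}.\]

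To finish, I would observe that the $n$ identical Left options are mutually dominated (equal options dominate each other), so all but one can be discarded without changing the value, leaving $G=\{n-1\mid\,\}$, which is the recursive definition of the integer $n$. There is no real obstacle here; the argument is essentially a bookkeeping exercise, and the only point that deserves explicit mention is the identification of the link of a vertex in a simplex with the smaller simplex, which is where the inductive hypothesis gets plugged in.
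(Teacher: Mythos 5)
Your proof is correct and follows the same route as the paper: induction on $n$, with the base case $\Delta_G=\langle\emptyset\rangle$ giving $0$ and the inductive step observing that Right has no moves while every Left move lands on the simplex over the remaining $n-1$ vertices. The paper simply says ``without loss of generality'' where you explicitly invoke the link construction and domination among the $n$ identical options, but the substance is identical.
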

\begin{proof}
We will prove this by induction on $n$.

If $n=0$, then $\Delta_{G}=\langle\emptyset\rangle$. We have shown previously that $G=0$ in this case.

Now assume without loss of generality that the SP-game with legal complex $\langle \{x_1,\ldots,x_{n-1}\}\rangle$ has value $n-1$.

If $\Delta_{G}=\langle \{x_1,\ldots,x_n\}\rangle$, then Right has no moves, while Left, without loss of generality, can move to $\langle \{x_1,\ldots,x_{n-1}\}\rangle$. By the induction hypothesis, we then have $G=\{n-1\mid\,\}=n$.
\end{proof}

From this and our knowledge about disjunctive sums, we get an immediate corollary on the value of a game whose legal complex is a simplex.
\begin{corollary}\label{thm:valueSimplex}
Let $G$ be an SP-game such that $\Delta_{G}$ is the simplex \[\langle\{x_1,\ldots,x_m,y_1,\ldots,y_n\}\rangle.\] Then $G$ has value $m-n$.
\end{corollary}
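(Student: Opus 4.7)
The plan is to reduce the statement to \cref{thm:valueInteger} via the disjunctive sum structure from \cref{thm:disjunctiveStructure}.

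First I would observe that the simplex $\langle\{x_1,\ldots,x_m,y_1,\ldots,y_n\}\rangle$ can be written as the join of the two simplices $\Delta_1 = \langle\{x_1,\ldots,x_m\}\rangle$ and $\Delta_2 = \langle\{y_1,\ldots,y_n\}\rangle$, because every subset of the combined vertex set splits uniquely into an $x$-part and a $y$-part, and every such union is a face. By \cref{thm:disjunctiveStructure}, $G$ therefore has the same legal complex as the disjunctive sum $G_1 + G_2$, where $G_1$ is the SP-game with legal complex $\Delta_1$ and $G_2$ is the SP-game with legal complex $\Delta_2$. Two SP-games with the same legal complex have the same game value (indeed the same literal form via the correspondence established in \cite{FaridiHN2019c}), so $G = G_1 + G_2$.

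Next I would apply \cref{thm:valueInteger} directly to $G_1$ to conclude $G_1 = m$. For $G_2$, whose vertices all belong to $\R$ rather than $\L$, I would invoke the remark on negatives: swapping the Left/Right labels on the vertex set of $\Delta_2$ yields an all-$\L$ simplex of size $n$, whose game value is $n$ by \cref{thm:valueInteger}, so $G_2 = -n$. Putting these together, $G = m + (-n) = m - n$.

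The main obstacle is essentially notational, not mathematical: one has to be careful that the join decomposition of the simplex really matches the definition of join given before \cref{thm:disjunctiveStructure} (the facets of $\Delta_1 * \Delta_2$ are unions of facets from each, which in this case are the single facets $\{x_1,\ldots,x_m\}$ and $\{y_1,\ldots,y_n\}$, producing the single combined facet). Once that identification is made, everything else is a direct citation of the preceding proposition and the remark on negatives, so the proof is essentially one or two lines.
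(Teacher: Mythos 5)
Your proposal is correct and matches the paper's own proof essentially verbatim: the paper also writes the simplex as the join $\langle\{x_1,\ldots,x_m\}\rangle * \langle\{y_1,\ldots,y_n\}\rangle$, applies \cref{thm:disjunctiveStructure} to get $G=G'+G''$, and cites \cref{thm:valueInteger} (with the remark on negatives) to conclude $G=m-n$. No gaps.
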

\begin{proof}
We can write $\Delta_{G}$ as a join:
\begin{align*}
	\Delta_{G}&=\langle\{x_1,\ldots,x_m,y_1,\ldots,y_n\}\rangle\\
			&=\langle\{x_1,\ldots,x_m\}\rangle * \langle\{y_1,\ldots,y_n\}\rangle.
\end{align*}
If we let $G'$ and $G''$ be SP-games such that $\Delta_{G'}=\langle\{x_1,\ldots,x_m\}\rangle$ and $\Delta_{G''}=\langle\{y_1,\ldots,y_n\}\rangle$, then by \cref{thm:disjunctiveStructure} we have $G=G'+G''$. By our previous result we further have $G'=m$ and $G''=-n$, so that $G=m-n$ as desired.
\end{proof}

With these two results on the existence of integers, we now turn to looking at the existence of integers in specific dimensions, thus checking if $n$ is a value of a game in $\widetilde{\V}_k\setminus\widetilde{\V}_{k-1}$ where we let $k$ vary.

Note that since $\birthformal (G)=\dim \Delta_{G}+1$, and the integer $n$ has birthday $|n|$, we cannot get $n$ as a value at dimension less than $|n|-1$. We have also already shown above that $n$ is possible at dimension equal to $|n|-1$. We will show below that in dimension $|n|$ the integer $n$ is not possible, while for all larger dimensions it is.

\begin{proposition}\label{thm:ValueIntDimImpossible}
An SP-game $G$ with $\dim \Delta_{G}=n$ cannot take on the value $n$ (or $-n$) under normal play.
\end{proposition}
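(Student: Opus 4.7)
The plan is to suppose for contradiction that $G$ is an SP-game with $\dim\Delta_G=n$ and $G=n$, and then split on whether $\Delta_G$ contains a Right vertex. The key external ingredient I would invoke is the standard birthday bound: for every short game $H$, if $\birth(H)\le k$ then $-k\le H\le k$. This can be checked by a short induction on $k$, since writing $H=\{H^{\L}\mid H^{\R}\}$ with $H^{\L},H^{\R}\in\G_{k-1}$ gives $H^{\L}\le k-1<k$ by the induction hypothesis, and since the integer $k$ has no Right option the inductive definition of ``$\le$'' immediately yields $H\le k$; symmetrically $H\ge -k$.

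In the first case, $\Delta_G$ contains a Right vertex $y$, so by \cref{remark:optionsLink} the game $G$ has a Right option $G^R$ whose legal complex is $\link_{\Delta_G}y$, which has dimension at most $n-1$. By the preceding proposition $\birthformal(G^R)\le n$, so $G^R\in\G_n$ and the birthday bound gives $G^R\le n$. However, $G=n$ forces $G\ge n$; writing $n=\{n-1\mid\,\}$ in canonical form, the inductive definition of ``$\ge$'' demands that every Right option of $G$ satisfy $G^R\not\le n$, which directly contradicts $G^R\le n$.

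In the second case, $\Delta_G$ has only Left vertices, so $G$ has no Right options. I would then show by induction on $n$ that every such all-Left SP-game satisfies $G=n+1$: for any vertex $x$ lying in a facet of maximum size $n+1$, the link $\link_{\Delta_G}x$ is all-Left of dimension exactly $n-1$ and so has value $n$ by the induction hypothesis, while every other Left option has value strictly less than $n$, so after domination $G=\{n\mid\,\}=n+1$. The base case $n=0$ of isolated Left vertices gives $G=\{0\mid\,\}=1$ directly, and since $n+1\ne n$, this contradicts $G=n$. The statement $G\ne -n$ then follows from the earlier observation that negating an SP-game corresponds to swapping the Left/Right labels on the vertices of its legal complex, which preserves dimension.

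The main obstacle is recognising the tension between the inductive definition of $G\ge n$, which forces every Right option to satisfy $G^R\not\le n$, and the formal-birthday bound on $G^R$, which forces $G^R\le n$; once that is in place the first case is immediate, and the second case is a routine induction resting on the identity $\{m\mid\,\}=m+1$ for nonnegative integers $m$.
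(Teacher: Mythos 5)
Your proof is correct, but it takes a genuinely different route from the paper's. The paper argues by induction on $n$: assuming no complex of dimension $n-1$ yields the value $n-1$, it supposes $G=n=\{n-1\mid\,\}$, uses the formal-birthday bound to rule out the canonical Left option $n-1$ arising from reversibility, and concludes that some actual Left option (a link of a Left vertex) has value $n-1$ and dimension $n-1$, contradicting the induction hypothesis. You instead split on whether $\Delta_G$ contains a Right vertex and, in the main case, derive the contradiction from the \emph{Right} options: any Right option has legal complex of dimension at most $n-1$, hence formal birthday at most $n$, hence value at most $n$ by the standard birthday bound, which is incompatible with $G\ge n$ since $n=\{n-1\mid\,\}$ has no Right option; the all-Left case is dispatched by noting such a game equals the size of its largest facet, namely $n+1$. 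Your version buys two things: it requires no induction on the statement itself (only the easy all-Left induction plus the birthday bound), and it sidesteps a delicate point in the paper's induction step, namely that the link realizing the value $n-1$ could a priori have dimension $n-2$ rather than $n-1$ (by \cref{thm:valueInteger}, dimension $n-2$ does admit the value $n-1$), so the induction hypothesis does not immediately apply to it. The one blemish is your claim that every other Left option in the all-Left case has value \emph{strictly} less than $n$ --- another vertex of a maximum facet also gives exactly $n$ --- but this is harmless: all Left options are at most $n$ and at least one equals $n$, so $G=\{n\mid\,\}=n+1$ regardless.
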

\begin{proof}
We will show by induction that the value $n$ is not possible. That $-n$ is not possible follows immediately since it is the negative, \ie could be achieved by switching the bipartition of vertices.

\textit{Base case}: As shown in \cref{sec:ValueBirthday}, we cannot get 0 with dimension 0.

\textit{Induction hypothesis}: Assume that an SP-game with legal complex of dimension $n-1$ cannot take on value $n-1$.

\textit{Induction step}: Assume that $G$ has the value $n$, \ie $G=\{n-1|\,\}$. Since $G$ is born by day $n+1$ (since $\dim\Delta_G=n$), we have that all Left options of $G$ have to be born by day $n$. Thus the Left option to $n-1$ in the canonical form of $G$ cannot have come through reversing (reversing an option born by day $k$ results in options born by day $k-2$). Thus in $\Delta$ there exists a facet $F$ of dimension $n$ such that $F=\{x_i\}\cup F'$ where the game equivalent to $\langle F'\rangle$ has value $n-1$. But $\langle F'\rangle$ has dimension $n-1$, a contradiction to the induction hypothesis.
\end{proof}

\begin{proposition}
Let \[U=\bigl\{\{x_1,x_2,\ldots, x_{k+1}\}\bigr\}\cup\bigl\{\{x_{i_0},x_{i_1},\ldots, x_{i_n},y\}\mid 1\le i_0,\ldots, i_n\le k+1\bigr\},\] where $k\ge n+1$, and let the facets of $\Delta_{G}$ be the elements of $U$. Then $\Delta_{G}$ has dimension $k$ and $G$ has value $n$. 
\end{proposition}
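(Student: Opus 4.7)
The plan is to verify the dimension by inspection and to prove $G = n$ by induction on $n \ge 0$, using \cref{remark:optionsLink} to identify each move with passage to a link of $\Delta_G$. The dimension claim is immediate: the facet $\{x_1,\ldots,x_{k+1}\}$ has $k+1$ elements while every other facet has $n+2$ elements, and the assumption $k \ge n+1$ forces $\dim \Delta_G = k$.

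The heart of the argument is to compute both links in closed form. For any $i$, the facets of $\link_{\Delta_G} x_i$ are $\{x_1,\ldots,\widehat{x_i},\ldots,x_{k+1}\}$ together with all sets $\{x_{j_0},\ldots,x_{j_{n-1}},y\}$ with distinct $j_\ell \in \{1,\ldots,k+1\}\setminus\{i\}$; after relabeling the remaining $x$-vertices, this is exactly the complex of the proposition with $(n,k)$ replaced by $(n-1,k-1)$, and the inequality $k \ge n+1$ becomes $k-1 \ge n$. Meanwhile $\link_{\Delta_G} y$ consists of all $(n+1)$-subsets of $\{x_1,\ldots,x_{k+1}\}$, i.e., the $n$-skeleton of a $(k+1)$-vertex simplex all of whose vertices are Left's. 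A short side-induction on $n$ shows this skeleton has value $n+1$: at $n=0$ it is a set of isolated Left vertices worth $\{0\mid\,\} = 1$, and the step yields $\{n\mid\,\} = n+1$ because each Left move lands in the $(n-1)$-skeleton on $k$ vertices. Granted these two computations, the inductive step for $n \ge 1$ is immediate: $\link_{\Delta_G} x_i$ has value $n-1$ by the outer induction, $\link_{\Delta_G} y$ has value $n+1$ by the side-induction, and after removing dominated Left options $G = \{n-1\mid n+1\}$, which equals the unique integer strictly between $n-1$ and $n+1$ by the number-simplification rule recalled in \cref{sec:background}; this integer is $n$.

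The base case $n=0$ does not fit the recursion, because after a Left move the vertex $y$ becomes isolated and $\link_{\Delta_G} x_i$ is no longer of the proposition's form: its facets are $\{x_1,\ldots,\widehat{x_i},\ldots,x_{k+1}\}$ and $\{y\}$, giving value $\{k-1\mid 0\}$ by direct computation. Together with $\link_{\Delta_G} y = \langle\{x_1\},\ldots,\{x_{k+1}\}\rangle$ of value $1$, this produces $G = \{\{k-1\mid 0\}\mid 1\}$, and an outcome check shows $o(G) = \mathscr{P}$: if Left moves first then Right replies to $0$ inside $\{k-1\mid 0\}$, and if Right moves first then Left replies to $0$ inside $1$. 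Hence $G = 0$. This base-case mismatch is the only real obstacle; once past it the induction is uniform.
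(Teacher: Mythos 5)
Your proof is correct and follows essentially the same route as the paper's: identify each Left option (via the link of $x_i$) as the $(n-1,k-1)$ instance of the same construction, identify the Right option (the link of $y$) as the $n$-skeleton of an all-Left simplex with value $n+1$, and induct on $n$ with the $n=0$ base case $\{\{k-1\mid 0\}\mid 1\}=0$ settled by a second-player-win argument. The only differences are presentational: you make the link computations and the value $n+1$ of the Right option explicit (the paper asserts the latter), and you induct directly on the value rather than first unrolling the recursion into the paper's nested bracket expression.
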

\begin{proof}
Since $k\ge n+1$, we have that $k+1\ge n+1+1$, \ie there are at least as many elements in $\{x_1,\ldots, x_{k+1}\}$ as in $\{x_{i_0},x_{i_1},\ldots, x_{i_n},y\}$. Thus $\{x_1,\ldots, x_{k+1}\}$ is a facet of maximal dimension, which shows that $\dim(\Delta_{G})=k$.

In $G$, Right's only move is to $\langle\{x_{i_0},x_{i_1},\ldots, x_{i_n}\}\mid 1\le i_0,\ldots, i_n\le k+1\rangle$, which has value $n+1$. Left's moves are symmetric, so assume without loss of generality she moves in $x_{k+1}$. This is then to $\Delta'$ which has the facets $\{x_1,x_2,\ldots, x_{k}\}$ and $\{x_{i_0},x_{i_1},\ldots, x_{i_{n-1}},y\}$ where $1\le i_0,\ldots, i_n\le k$. By induction, it can now be easily seen that $G$ has value
\[G=\{\ldots\{\{k-1-n|0\}|1\}\ldots|n+1\}.\]

To prove that $G$ has value $n$, we will use induction on $n$.

\textit{Base case:} If $n=0$, then $\Delta_{G}=\langle \{x_1,\ldots, x_{k+1}\}, \{x_1,y\},\ldots, \{x_{k+1},y\}\rangle$. Thus $G=\{\{k-1|0\}|1\}$ which is $0$ for all $k\ge 1$ since it is a second player win.

\textit{Induction hypothesis:} Assume that for a fixed $j$ and for all $k\geq j+1$, we have $\{\ldots\{\{k-1-j|0\}|1\}\ldots|j+1\}=j$.

\textit{Induction step:} Suppose that $G=\{\{\ldots\{\{k-1-(j+1)|0\}|1\}\ldots|j+1\}|(j+1)+1\}$ for $k\ge j+2$. By the induction hypothesis we have that $\{\ldots\{\{(k-1)-1-j|0\}|1\}\ldots|j+1\}=j$ since $k-1\ge j+1$. Thus $G=\{j|j+2\}=j+1$.
\end{proof}

\begin{example}
We will construct a simplicial complex $\Delta$ of dimension 2 that is the legal complex of an SP-game $G$ with value 1. In this case $U=\{\{x_1,x_2,x_3\}\}\cup\{\{x_1,x_2,y\}, \{x_1,x_3,y\}, \{x_2,x_3,y\}\}$ and the facets of $\Delta$ are the sets of $U$. 

Right's only move is on $y$, after which Left has two remaining moves, \ie this option has value 2.

All of Left's options are symmetric, so we will assume without loss of generality that she moves on $x_1$.
\begin{itemize}
\item After Right moved on $y$, Left still has one move, thus this Right option has value 1.
\item After Left moved on $x_2$, both Left and Right have options to 0. Thus this Left option has value $\{0\mid 0\}=*$.
\end{itemize}
Thus the Left option of $x_1$ has value $\{*\mid 1\}$.

In total, $G$ has value $\{\{*\mid 1\}\mid 2\}=1$.
\end{example}

To summarize, the integer $n$ is a possible value of an SP-game if the legal complex has dimension $n-1$ or greater than $n$, but not dimensions $n$ or less than $n-1$. 

\subsection{Fractions}
The following construction shows that all fractions of the form $\frac{1}{2^n}$ are possible values of SP-games. This construction is also minimal in the sense that the dimension of the legal complex is one lower than the birthday of the fraction. All other fractions can be obtained through disjunctive sums.

\begin{theorem}\label{thm:valueFractions}
Let $S_1, S_2,\ldots, S_{2^n}$ be the subsets of $\{y_1, y_2,\ldots, y_n\}$. Let
\[\Delta_{G}=\langle \{x_1\}\cup S_1, \{x_2\}\cup S_2,\ldots, \{x_{2^n}\}\cup S_{2^n}\rangle.\]
Then $G$ has value $\displaystyle\frac{1}{2^n}$.
\end{theorem}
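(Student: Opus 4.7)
The plan is to proceed by induction on $n$, analyzing Left and Right options via the link construction of \cref{remark:optionsLink} and invoking the basic value results of \cref{thm:valueInteger,thm:valueSimplex}. The base case $n=0$ is immediate: the only subset of the empty set is $\emptyset$, so $\Delta_G = \langle\{x_1\}\rangle$ and $G = 1 = \frac{1}{2^0}$ by \cref{thm:valueInteger}.

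For the inductive step I would first handle Left's options. Because the sets $S_1,\ldots,S_{2^n}$ are distinct, each $x_i$ lies in exactly one facet, so $\link_{\Delta_G} x_i = \langle S_i\rangle$ is a pure-Right simplex on $|S_i|$ vertices. By \cref{thm:valueSimplex}, moving on $x_i$ leads to a position of value $-|S_i|$. Exactly one of the $S_i$ equals $\emptyset$ and all others are nonempty, so one Left option has value $0$ and the rest are strictly negative integers; the latter are dominated by the $0$ option, and Left's side simplifies to the single option $0$.

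For Right's options, the key is to recognize $\link_{\Delta_G} y_j$ as, up to relabeling, the legal complex of the analogous game with parameter $n-1$. The facets of this link are the sets $\{x_i\}\cup(S_i\setminus\{y_j\})$ taken over those $i$ with $y_j\in S_i$. As $i$ varies over these $2^{n-1}$ indices, the sets $S_i\setminus\{y_j\}$ enumerate each subset of $\{y_1,\ldots,y_n\}\setminus\{y_j\}$ exactly once, each paired with a distinct $x$-vertex. By the inductive hypothesis every Right option then has value $\frac{1}{2^{n-1}}$, and combining yields $G = \{0 \mid \frac{1}{2^{n-1}}\}$, which is the canonical form of $\frac{1}{2^n}$.

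The main point requiring care is the bookkeeping in the Right-option step: verifying that the correspondence between $\{i : y_j \in S_i\}$ and the subsets of $\{y_1,\ldots,y_n\}\setminus\{y_j\}$ is a genuine bijection so that the link really is (an isomorphic copy of) the $(n-1)$-parameter complex. Everything else is a routine application of link computations, domination of negative integers by $0$, and the recursive definition of $\frac{1}{2^n}$.
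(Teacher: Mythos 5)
Your proposal is correct and follows essentially the same route as the paper: induction on $n$, with Left's options identified as $\langle S_i\rangle$ (all dominated by the $0$ option from $S_i=\emptyset$) and Right's options identified, via the link of a $y$-vertex, with the $(n-1)$-parameter complex so that the inductive hypothesis gives $\bigl\{0\mid\frac{1}{2^{n-1}}\bigr\}=\frac{1}{2^n}$. The only cosmetic difference is that the paper appeals to symmetry and treats only the move in $y_n$, whereas you spell out the bijection between $\{i: y_j\in S_i\}$ and the subsets of $\{y_1,\ldots,y_n\}\setminus\{y_j\}$ for a general $y_j$; this is a slightly more explicit version of the same bookkeeping.
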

\begin{proof}
We will prove this by induction on $n$.

\textit{Base case:} 
For $n=0$ we have $\Delta_{G}=\langle\{x_1\}\rangle$. We have shown in the previous section that $G=1$ in that case.

\textit{Induction hypothesis:} Let $S_1', S_2',\ldots, S_{2^{n-1}}'$ be the subsets of $\{y_1,y_2,\ldots, y_{n-1}\}$. Assume that the game with legal complex $\displaystyle\Delta'=\langle \{x_1'\}\cup S_1',\ldots, \{x_{2^{n-1}}'\}\cup S_{2^{n-1}}'\rangle$ has value $\displaystyle\frac{1}{2^{n-1}}$.

\textit{Induction step:} Without loss of generality, assume that $S_1,S_2,\ldots, S_{2^n}$ are ordered such that $S_{2^n}=\emptyset$ and the sets $S_1, S_2,\ldots, S_{2^{n-1}}$ are those containing $y_n$.

Left has the options to move to the games with legal complexes $\langle S_1\rangle$, $\langle S_2\rangle$, $\ldots$, $\langle S_{2^n}\rangle$. All of those options, except for the one corresponding to $\langle S_{2^n}\rangle=\langle\emptyset\rangle$, will be negative. The option corresponding to $\langle\emptyset\rangle$ is $0$, and thus dominates all other options.

All of Right's moves are symmetric. We will assume without loss of generality that he moves in $y_n$. This option leaves us with the game with legal complex $\langle \{x_1\}\cup S_1\setminus\{y_n\}, \{x_2\}\cup S_2\setminus\{y_n\},\ldots, \{x_{2^{n-1}}\}\cup S_{2^{n-1}}\setminus\{y_n\}\rangle$. This game has value $\displaystyle\frac{1}{2^{n-1}}$ by the induction hypothesis.

Thus $\displaystyle G=\left\{0\,\Big\vert \,\frac{1}{2^{n-1}}\right\}=\frac{1}{2^n}$.
\end{proof}

The following is then an immediate consequence using disjunctive sums.
\begin{corollary}\label{thm:valueAllFractions}
Given any dyadic rational $\displaystyle\frac{a}{2^n}$ there exists an SP-game $G$ such that $\displaystyle G=\frac{a}{2^n}$.
\end{corollary}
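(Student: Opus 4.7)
The plan is straightforward: reduce to \cref{thm:valueFractions} and \cref{thm:valueInteger} via disjunctive sums, using \cref{remark} to stay within the class of SP-games. By the remark preceding \cref{thm:disjunctiveStructure} on negatives, it suffices to handle $\frac{a}{2^n}$ with $a > 0$, since the negative of an SP-game is realized by swapping the bipartition of the legal complex.

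The key step is a binary decomposition. I would write $a = q\cdot 2^n + r$ with $0\le r < 2^n$, and expand $r = \sum_{i=1}^n b_i 2^{n-i}$ with $b_i \in \{0,1\}$, so that
\[\frac{a}{2^n} = q + \sum_{i=1}^n b_i\cdot \frac{1}{2^i}.\]
By \cref{thm:valueInteger}, the integer $q$ is the value of the SP-game whose legal complex is a simplex on $q$ Left vertices, and by \cref{thm:valueFractions}, each $\frac{1}{2^i}$ is the value of an explicit SP-game.

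Finally, taking the disjunctive sum of these summands (including $b_i$ copies of the game realizing $\frac{1}{2^i}$ for each $i$) yields an SP-game by \cref{thm:disjunctiveStructure}, since the join of legal complexes is again a legal complex; its game value is the sum $\frac{a}{2^n}$, as addition in $\G$ restricts to ordinary addition on $\mathbb{D}$. There is no real obstacle here: the content of the corollary lies entirely in \cref{thm:valueInteger} and \cref{thm:valueFractions}, and the present statement is bookkeeping about closure of the SP-class under disjunctive sums.
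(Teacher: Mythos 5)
Your proposal is correct and matches the paper's intent exactly: the paper gives no explicit proof, stating only that the corollary is ``an immediate consequence using disjunctive sums'' of \cref{thm:valueFractions}, and your binary decomposition into an integer plus unit fractions, combined via \cref{thm:disjunctiveStructure}, is precisely that argument spelled out. No gaps.
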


\subsection{Switches}
We will show that all switches $\{a\mid b\}$ with $a\geq b$ being integers are possible as game values of SP-games. We will begin with $a$ non-negative and $b$ non-positive.

\begin{proposition}\label{thm:valueSwitchA}

If $a,b\ge 0$ are integers, then the SP-game $G$ with $\Delta_{G}=\langle \{x_0,\ldots, x_a\}, \{y_0,\ldots, y_b\}\rangle$ has value $\{a\mid -b\}$.
\end{proposition}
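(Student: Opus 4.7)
The plan is to directly compute the Left and Right options of $G$ using the link description of moves from \cref{remark:optionsLink}, and then recognize each option as a value already computed in the integer results.

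First I would describe Left's options. Left's legal moves correspond to claiming some vertex $x_i$ with $0 \le i \le a$. The key observation is that $x_i$ lies only in the facet $\{x_0,\ldots,x_a\}$: since $\{x_i,y_j\}$ is not contained in either of the two facets, it is not a face of $\Delta_G$, so $x_i$ and $y_j$ are never joinable in the link computation. Hence
\[\link_{\Delta_G} x_i = \langle \{x_0,\ldots,x_a\}\setminus\{x_i\}\rangle,\]
which is a simplex on $a$ Left vertices. By \cref{thm:valueInteger}, this Left option has value $a$.

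By an entirely symmetric argument, each Right option corresponds to claiming some $y_j$ and leaves the simplex $\langle \{y_0,\ldots,y_b\}\setminus\{y_j\}\rangle$ on $b$ Right vertices, which has value $-b$ by \cref{thm:valueSimplex} (applied with $m=0$). Since every Left option equals $a$ and every Right option equals $-b$, we conclude
\[G = \{\,a,\ldots,a \mid -b,\ldots,-b\,\} = \{a \mid -b\}.\]

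There is no serious obstacle here; the argument is a direct application of the link machinery already in place. The only subtlety is verifying that the link of a vertex in one facet sees nothing of the other facet, and this is immediate from the fact that the two facets $\{x_0,\ldots,x_a\}$ and $\{y_0,\ldots,y_b\}$ are disjoint, so no vertex from one can be extended by a vertex from the other inside $\Delta_G$. Note also that since $a \ge 0$ and $-b \le 0$, we have $a \ge -b$, so $\{a \mid -b\}$ is indeed a switch (or $*$, in the degenerate case $a = b = 0$), consistent with the claim.
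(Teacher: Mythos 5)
Your proposal is correct and follows essentially the same route as the paper's own proof: compute that every Left option is the simplex on the remaining $a$ Left vertices (value $a$ by \cref{thm:valueInteger}) and every Right option is the simplex on $b$ Right vertices (value $-b$), giving $G=\{a\mid -b\}$. The only difference is that you spell out the link computation and the disjointness of the two facets explicitly, which the paper leaves implicit.
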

\begin{proof}
Left's moves are all to a simplex consisting of $a$ Left vertices, thus has value $a$. Similarly Right going first will move to $-b$. Thus $G$ has value $\{a\mid -b\}=\frac{a-b}{2}\pm\frac{a+b}{2}$.
\end{proof}

If a connected legal complex is desired and $a,b\ge 1$, then we can also add in the face $\{x_0,y_0\}$, and a move in this face will be dominated, thus giving the same value.

Next we consider the case in which $a$ is positive and $b$ non-negative. The case of $0\ge a>b$ is the negative of this.

\begin{proposition}\label{thm:valueSwitchB}
If $a>b\ge 0$ are integers, then the SP-game $G$ with $\Delta_{G}=\langle \{x_1,\ldots, x_{a+1}\}, \{x_1,\ldots, x_b,y\}\rangle$ has value $\{a\mid b\}$.
\end{proposition}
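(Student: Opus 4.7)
The plan is to prove the proposition by induction on $b$, using the link construction of \cref{remark:optionsLink} to read the options directly off the two facets of $\Delta_G$.

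First I would enumerate Right's options: since $y$ is the only Right vertex, Right's sole move is on $y$, and as $y$ lies only in the facet $\{x_1,\ldots,x_b,y\}$, we have $\link_{\Delta_G} y = \langle\{x_1,\ldots,x_b\}\rangle$, a simplex on $b$ Left vertices with value $b$ by \cref{thm:valueInteger}. Next I would analyze Left's options by splitting on whether $x_i$ lies in one or both facets. For $b < i \leq a+1$, the vertex $x_i$ lies only in $\{x_1,\ldots,x_{a+1}\}$; the face $\{x_1,\ldots,x_b,y\}$ is absent from the link because $\{x_i\}\cup\{x_1,\ldots,x_b,y\}$ sits in no facet (the first facet excludes $y$, the second excludes $x_i$), so
\[\link_{\Delta_G} x_i = \langle\{x_1,\ldots,x_{a+1}\}\setminus\{x_i\}\rangle,\]
a simplex giving value $a$ by \cref{thm:valueInteger}. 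For $1\leq i\leq b$, the vertex $x_i$ lies in both facets, so the link becomes
\[\langle\{x_1,\ldots,x_{a+1}\}\setminus\{x_i\},\, \{x_1,\ldots,x_b,y\}\setminus\{x_i\}\rangle,\]
which, after relabeling, matches the structure of $\Delta_G$ with parameters $(a-1,b-1)$. Since $a-1 > b-1 \geq 0$ in this case, the inductive hypothesis gives this option value $\{a-1\mid b-1\}$.

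Assembling the options yields
\[G = \{a,\,\{a-1\mid b-1\} \mid b\},\]
omitting the second Left option when $b=0$. The key simplification is that the Left option $\{a-1\mid b-1\}$ is dominated by the Left option $a$: since any switch $\{a-1\mid b-1\}$ is bounded above by its Left stop $a-1$, we have $a \geq a-1 \geq \{a-1\mid b-1\}$. Removing the dominated option leaves $G = \{a\mid b\}$, which is already canonical as a switch of two integers with $a>b$.

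The base case $b=0$ is exactly the same computation with only the simplex-type Left option present, producing $\{a\mid 0\}$. I expect no major obstacles, but the delicate point is the link calculation when $b<i\leq a+1$: one must verify that $\{x_1,\ldots,x_b,y\}$ does not contribute to $\link_{\Delta_G} x_i$, which relies on the fact that a subset is a face only if it sits inside some facet. Everything else is bookkeeping about domination.
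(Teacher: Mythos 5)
Your computation of the options is correct and matches the paper's proof almost exactly: the paper also finds Right's unique option to value $b$, Left options to the simplex value $a$ and to the smaller instance $\{a-1\mid b-1\}$, and concludes by discarding the latter as dominated. (The paper inducts on $a$ and you induct on $b$, but since the recursion sends $(a,b)$ to $(a-1,b-1)$ these are the same induction.) Your link calculations, including the point that $\{x_1,\ldots,x_b,y\}$ contributes nothing to $\link_{\Delta_G}x_i$ when $i>b$, are right.

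However, your justification of the domination contains a false step. A switch is \emph{not} bounded above by its Left stop: $\{a-1\mid b-1\}$ with $a-1>b-1$ is confused with every number in the interval $[b-1,a-1]$, and in particular $\{a-1\mid b-1\}\incomp a-1$ (in the difference $\{a-1\mid b-1\}-(a-1)$, Left moving first goes to $(a-1)-(a-1)=0$ and wins). So the chain $a\ge a-1\ge\{a-1\mid b-1\}$ breaks at the second inequality. The conclusion you want is nevertheless true for the right reason: $a$ is a number \emph{strictly} greater than the Left stop $a-1$ of the switch, and a game is less than any number exceeding its Left stop; equivalently, in $\{a-1\mid b-1\}-a$ Left's only move is to $(a-1)-a=-1<0$, so the difference is at most $0$ and $\{a-1\mid b-1\}\le a$. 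With that one inequality repaired, the rest of your argument (including the $b=0$ base case) goes through and reproduces the paper's proof.
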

\begin{proof}

We will prove this by induction on $a$.

If $a=1$, then necessarily $b=0$, so that $\Delta_{G}=\langle\{x_1,x_2\},\{y\}\rangle$, which we have shown in the previous result has value $\{1\mid 0\}$

Now assume that if $a>k>0$, $j\ge 0$ and $\Delta_{G'}=\langle \{x_1,\ldots, x_{k+1}\}, \{x_1,\ldots, x_j,y\}\rangle$ then $G'=\{k\mid j\}$.

If Left moves in any of $x_1,\ldots,x_b$, say without loss of generality in $x_1$, this is to $\langle\{x_2,\ldots,x_a\},\{x_2,\ldots,x_b,y\}\rangle$. By induction, this has value $\{a-1\mid b-1\}$.

If Left moves in any of $x_{b+1},\ldots, x_a$, say without loss of generality in $x_a$, then it is to $\langle\{x_0,\ldots,x_{a-1}\}\rangle$. This has value $a$.

Right's only move is to $\langle\{x_1,\ldots,x_b\}\rangle$, which has value $b$. Thus $G$ has value $\{\{a-1\mid b-1\},a\mid b\}=\{a\mid b\}$.
\end{proof}

Note that the above can also be shown by using that disjunctive sum of games corresponds to the join of simplicial complexes (see \cref{thm:disjunctiveStructure}). The simplicial complex can be written as the following join:
\[\langle \{x_1,\ldots, x_{a+1}\}, \{x_1,\ldots, x_b,y\}\rangle=\langle \{x_1,\ldots, x_{b}\}\rangle*\langle \{x_{b+1},\ldots, x_{a+1}\}, \{y\}\rangle\]
The first gives a game with value $b$ by \cref{thm:valueInteger}, and the second a game with value $\{a-b\mid 0\}$ by \cref{thm:valueSwitchA}. And we indeed have
\[b+\{a-b\mid 0\}=\{a\mid b\}.\]

\subsection{Tiny and Miny}

We will show that all $+_n$, where $n$ is a positive integer, are possible game values of SP-games. Since $+_0=\,\uparrow$, we have already shown the existence of this value (see \cref{sec:valueBirthday2}).

\begin{proposition}\label{thm:valueTiny}
If $n$ is a positive integer, then the SP-game $G$ with $\Delta_{G}=\langle \{y_1,\ldots, y_{n+1}\}, \{x_1,y_1\},\ldots, \{x_1,y_{n+1}\}, \{x_2\}\rangle$ has value $+_n$.
\end{proposition}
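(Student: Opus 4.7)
The plan is to directly read off Left's and Right's options from $\Delta_G$, simplify by dominance, and match the result against the definition $+_n=\{0\mid\{0\mid -n\}\}$.

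First I would enumerate Left's options. Playing $x_2$: its link in $\Delta_G$ is $\{\emptyset\}$, since $\{x_2\}$ is an isolated facet, giving value $0$. Playing $x_1$: the link is $\langle\{y_1\},\ldots,\{y_{n+1}\}\rangle$, a set of $n+1$ isolated Right vertices, in which only Right has moves and each leads to $0$, so the value is $\{\,\mid 0\}=-1$. Since $0\ge -1$, Left's option to $-1$ is dominated by the option to $0$ and can be discarded, leaving Left with the single effective option of value $0$.

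Next, by the symmetry among the $y_i$'s (each $y_i$ lies in exactly the facets $\{y_1,\ldots,y_{n+1}\}$ and $\{x_1,y_i\}$), all $n+1$ of Right's moves produce isomorphic positions, so it suffices to analyze the move on $y_1$. Its link is $H=\langle\{y_2,\ldots,y_{n+1}\},\{x_1\}\rangle$. Within $H$, Left's only move $x_1$ has link $\{\emptyset\}$ and value $0$, while each of Right's symmetric moves on $y_i$ with $i\ge 2$ has link equal to a Right simplex, whose value is given by \cref{thm:valueSimplex}. Combining yields $H=\{0\mid -n\}$.

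Putting the two computations together, $G=\{0\mid\{0\mid -n\}\}=+_n$, already in canonical form (no reversibility arises since the Left options $-G^{LR}$ and the Right option's sub-options are straightforward integers/$0$). The main obstacle is purely the bookkeeping of links: in particular one must notice that the isolated facet $\{x_1\}$ inside $H$ prevents $H$ from decomposing as a disjunctive sum via \cref{thm:disjunctiveStructure}, because playing $x_1$ collapses all subsequent $y_i$ moves and vice versa. It is precisely this asymmetric ``one-or-the-other'' structure of $H$ that produces the nested $\{0\mid \cdot\}$ pattern characteristic of a tiny, rather than a number.
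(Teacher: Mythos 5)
Your approach is the same as the paper's: read off Left's options ($0$ from $x_2$, $-1$ from $x_1$, the latter dominated), use the symmetry of the $y_i$ to reduce Right to a single representative move, and compute the resulting link $H=\langle\{y_2,\ldots,y_{n+1}\},\{x_1\}\rangle$. The paper's proof proceeds in exactly this way.

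However, there is an off-by-one slip in your step ``Combining yields $H=\{0\mid -n\}$.'' The Right simplex remaining in $H$ is $\{y_2,\ldots,y_{n+1}\}$, which has $n$ vertices; after Right moves on one of them, the link is a simplex on the $n-1$ surviving Right vertices, whose value by \cref{thm:valueSimplex} is $-(n-1)$, not $-n$. Hence $H=\{0\mid -(n-1)\}$ and the whole game is $\{0\mid\{0\mid -(n-1)\}\}=+_{n-1}$ rather than $+_n$. The case $n=1$ makes this concrete: there $H=\langle\{y_2\},\{x_1\}\rangle=\{0\mid 0\}=*$, so $G=\{0\mid *\}=\,\uparrow\,=+_0$, not $+_1$. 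You should know that the paper's own proof asserts the same incorrect value for $H$, so your write-up faithfully reproduces the published argument, error included; to actually obtain $+_n$ one must enlarge the construction by one Right vertex (facets $\{y_1,\ldots,y_{n+2}\}$, $\{x_1,y_i\}$ for $1\le i\le n+2$, and $\{x_2\}$), or else restate the conclusion as $+_{n-1}$.
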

\begin{proof}
Left's move in $x_1$ is to $\langle\{y_1\},\ldots,\{y_{n+1}\}\rangle$, which has value $-1$. The move in $x_2$ is to $0$, and this also dominates the move to $-1$.

Right's moves are all symmetric, so assume without loss of generality that he makes the move corresponding to $y_1$. Then this is to $\langle\{y_2,\ldots,y_{n+1}\},\{x_1\}\rangle$, and it can be easily seen that this has value $\{0\mid -n\}$.

Thus $G$ has value $\{0\mid \{0\mid -n\}\}=+_n$.
\end{proof}

We know from \cite{UiterwijkB2015} that several other tinies are also possible values of \Domineering, thus are elements of $\V$, for example $+_{1/2}$, $+_{1/4}$, and $+_{(1/2)*}$.

\subsection{Nimbers}\label{sec:nimbers}
Contrary to other values, we will show the existence of nimbers as game values of SP-games by constructing the ruleset and board directly, rather than through the legal complex.

\begin{proposition}\label{thm:valueNimber}
For every nimber $*n$ there exists an SP-game $(R,B)$ that has value $*n$.
\end{proposition}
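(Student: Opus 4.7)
The goal is to exhibit, for each $n\ge 0$, a concrete SP-ruleset $R$ and board $B$ whose starting position has value $*n$. I would proceed by induction on $n$. The base cases $n=0$ (the empty game, value $0=*0$) and $n=1$ (e.g.\ \snort on a single isolated vertex, which has value $\{0\mid 0\}=*$) are already in hand from earlier in the paper. For the inductive step, assume we have SP-games $(R_k,B_k)$ with value $*k$ for every $k<n$.

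The construction I would try is to place the boards $B_0,B_1,\ldots,B_{n-1}$ side-by-side on one combined board $B$ together with a small ``committing'' gadget, and take $R$ to be the union of the $R_k$ augmented by the rule of the gadget. The gadget is designed so that every opening move, by either player, selects one index $k\in\{0,1,\ldots,n-1\}$ and restricts all subsequent play to the component $(R_k,B_k)$. Because the gadget must be symmetric in the two players, Left and Right have identical sets of opening options, and each such option is (after any simplification in the component) a game of value $*k$. By the definition of the nimber $*n$ given in \cref{sec:background}, the value of the resulting game is then $\{*0,*1,\ldots,*(n-1)\mid *0,*1,\ldots,*(n-1)\}=*n$.

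The main obstacle is property (iv) of \cref{def:SPgame}: a naive implementation of the gadget (``whoever places the first gadget piece locks in its component'') depends on the order of moves and is therefore not strong-placement. I would resolve this by making the gadget's basic positions pairwise incompatible in the legal complex, so that no two committing pieces can coexist in any legal position; then legality is determined purely by the set of pieces placed, independently of their order. Under this setup, the link of each committing vertex in the combined legal complex is exactly $\Delta_{R_k,B_k}$, possibly joined with a dominated portion corresponding to the unused pieces of the gadget, so the canonical-form option through that vertex is $*k$. The last piece of bookkeeping is to check that all non-gadget opening moves are either forbidden by the incompatibility structure or dominated/reversible, and that none of the nimber options $*0,\ldots,*(n-1)$ themselves collapse through reversibility — ensuring that the canonical form at the root is precisely $\{*0,\ldots,*(n-1)\mid *0,\ldots,*(n-1)\}=*n$.
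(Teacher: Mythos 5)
Your overall strategy --- building $*n$ recursively as $\{*0,\ldots,*(n-1)\mid *0,\ldots,*(n-1)\}$ by gluing the games for $*0,\ldots,*(n-1)$ behind a ``committing'' gadget --- is not the paper's route, and as written it has a genuine gap at exactly the point you flag. Condition (iv) of \cref{def:SPgame} forces the legal complex to be downward closed: if $\{c_k\}\cup F$ is a legal position (committing piece plus some pieces of the $k$-th component), then $F$ alone is legal, so every vertex of every component is a legal \emph{opening} move. Hence the non-gadget opening moves cannot be ``forbidden by the incompatibility structure''; they are always present. That leaves only your fallback claim that they are dominated or reversible, which you assert but do not verify. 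Concretely, the option through a component vertex $v$ is the join of the single vertex $c_k$ with $\link_{\Delta_{R_k,B_k}}v$, i.e.\ a disjunctive sum of a one-vertex game with an option of $*k$; showing that all of these extra options leave the root value at $*n$ requires a strengthened induction hypothesis (for instance, that every option of the day-$k$ construction is a nimber $*m$ with $m<n$, so the mex is unaffected), and in the partizan legal-complex formalism you must also explain how a single committing ``move'' can be available to both players, since each vertex of a legal complex belongs to exactly one of $\L$ and $\R$. None of this bookkeeping is done, and it is the entire content of the proof.

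The paper avoids the gadget altogether with a direct construction: both players may place any of the pieces $K_1,\ldots,K_n$ on the board $K_n$. Placing $K_l$ covers $l$ vertices and reduces the game to the same ruleset on $K_{n-l}$, so the options from $K_n$ are exactly the games on $K_0,\ldots,K_{n-1}$, i.e.\ $*0,\ldots,*(n-1)$ for each player, and a one-line induction gives $*n$; the game is just single-pile \textsc{Nim}. Here order-independence is automatic (legality depends only on the total number of covered vertices), the selection of which smaller nimber to move to is encoded in the \emph{size} of the piece placed rather than in an auxiliary gadget, and no domination or reversibility analysis is needed. If you want to salvage your approach, the cleanest setting is the impartial legal complex of \cref{def:impartialGameComplex}, where the extra options can be controlled by a mex argument; but that argument still has to be written out.
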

\begin{proof}
Let $R$ be the ruleset in which both Left and Right have as their pieces $K_1,K_2,\ldots, K_n$, played on $B=K_n$. We will show by induction that this has value $*n$. Note that $*0=0$ and $*1=*$.

\textit{Base case:} For $n=0$, the board is empty, and Left and Right have no options, thus $(R,B)=0$.

\textit{Induction hypothesis:} Assume that for all $j<n$, the game in which Left and Right can play pieces $K_1,\ldots, K_j$ on the board $K_j$ has value $*j$.

\textit{Induction step:} We now have as our board $B=K_n$ and pieces $K_1,\ldots, K_n$. Suppose that either player places $K_l$ as their first piece. The game from this point is now equivalent to playing $K_1,\ldots, K_n$ on $K_{n-l}$. Since the pieces $K_{l+1},\ldots,K_n$ cannot be placed, this has value $*l$ by induction hypothesis. Thus we have $(R,B)=\{0,*,\ldots, *(n-1)\mid 0,*,\ldots, *(n-1)\}=*n$.
\end{proof}

\begin{remark}
Note that each game defined above is equivalent to the game \textsc{Nim} on a single pile. Playing \textsc{Nim} on several piles is equivalent to playing a disjunctive sum of this game, showing that \textsc{Nim} can be thought of as an SP-game.
\end{remark}

\section{Impartial SP-Games}\label{sec:impartial}

The game we constructed in \cref{thm:valueNimber} to show the existence of the nimbers is an impartial game. \textbf{Impartial games} are those in which the two players always have the same options and thus we do not distinguish Left from Right.

In particular, this means that making a distinction between basic Left positions and basic Right positions does not give us any additional information. We can instead introduce a simplified version of the legal complex for impartial games.

\begin{definition}\label{def:impartialGameComplex}
For an impartial SP-game $G$ the \textbf{impartial legal complex} {\boldmath $\Delta^\I_{R,B}$} is the simplicial complex with vertex set $\{x_1,\ldots, x_n\}$ and whose faces consist of vertices corresponding to the basic positions forming a legal position. 
\end{definition}

Just as for the general case, each simplicial complex is the impartial legal complex of some impartial SP-game, and the disjunctive sum of two games corresponds to the join of the impartial legal complexes.

For an impartial combinatorial game, since there is no differentiation between Left and Right, game options are just listed in curly brackets, without a divider. For example, the value $*$ can be written as $\{0\}$. A general option of the position $P$ is indicated as $P'$, so $P=\{P_1', P_2',\ldots, P_k'\}$.

The well-known Sprague-Grundy Theorem states that each impartial combinatorial game has at its value a nimber. To find which nimber an impartial game $G$ is equal to, we use the \textbf{minimal excluded value} (mex). The {\boldmath $\mex(A)$} of a finite set of non-negative integers $A$ is the least non-negative integer not contained in $A$. If the impartial game $G$ inductively is equal to $\{*n_1,\ldots, *n_k\}$, then $G=*n$ where $n=\mex\{n_1,\ldots,n_k\}$.

By \cref{thm:valueNimber} we already know that all nimbers, thus all possible game values, are game values of some impartial SP-games. We are able to show though how some properties of the impartial legal complex restrict which nimbers are possible.

Using that a game has value 0 if and only if it is a second-player win, \ie the second player always has a good responding move, we have the following two results.
\begin{proposition}\label{thm:ImpartialEven}
Let $G$ be an impartial SP-game. If all facets of $\Delta_{G}^\I$ have odd dimension, thus even size, then $G$ has value 0.
\end{proposition}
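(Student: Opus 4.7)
The plan is to show that the second player has a winning strategy in $G$; as noted earlier, this is equivalent to $G = 0$. I would proceed by strong induction on the number of vertices of $\Delta_G^\I$, proving the following statement: for any simplicial complex all of whose facets have even size, the associated impartial SP-game has value $0$. The base case, with no vertices, is immediate, since then no player has any moves.

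For the inductive step, consider any first move, corresponding to a vertex $v$; by \cref{remark:optionsLink} the resulting position has impartial legal complex $\link_{\Delta_G^\I} v$. The key observation is that $v$ must lie in some facet $F$ of $\Delta_G^\I$, and since $|F|$ is even and positive, $|F| \geq 2$, so there is some $w \in F \setminus \{v\}$. Then $\{v, w\} \subseteq F$ is a face of $\Delta_G^\I$, so the second player may legitimately respond by playing $w$, moving the game to $\link_{\Delta_G^\I}\{v, w\}$.

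Unwinding the definition of the link, the facets of $\link_{\Delta_G^\I}\{v, w\}$ are exactly the sets $F' \setminus \{v, w\}$ as $F'$ ranges over facets of $\Delta_G^\I$ containing both $v$ and $w$; each such set has even size $|F'| - 2$. Since this complex has strictly fewer vertices than $\Delta_G^\I$, the induction hypothesis applies and its value is $0$. Thus after the second player's reply, the original first player is left in a game of value $0$ and must lose. This establishes the claim.

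The one point I would be most careful about is the characterization of the facets of the two-step link: it uses the fact that facets of a simplicial complex are pairwise incomparable under inclusion, so that every ``maximal face containing $\{v, w\}$'' is in fact a facet of $\Delta_G^\I$, rather than merely a face containing $\{v, w\}$ and failing to be a facet. Once this is verified, the proof is a clean pairing-style induction in which the second player always responds inside a shared facet with the first player.
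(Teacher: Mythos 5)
Your proof is correct, but it takes a genuinely different route from the paper's. The paper's argument is a pure parity count with no strategy at all: every play of $G$ ends at a maximal legal position, i.e.\ at a facet of $\Delta_G^\I$; since every facet has even size, every play consists of an even number of moves, so the second player always makes the last move and wins regardless of how either player plays. Hence $o(G)=\mathscr{P}$ and $G=0$. You instead exhibit an explicit second-player strategy (answer $v$ by some $w$ in a facet containing $v$) and induct on the number of vertices, using that the facets of $\link_{\Delta_G^\I}\{v,w\}$ are exactly the sets $F'\setminus\{v,w\}$ for facets $F'\supseteq\{v,w\}$, which again have even size; your flagged verification that such maximal faces really are facets of $\Delta_G^\I$ is the right thing to check and is correct. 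The trade-off: the paper's proof is a two-line observation showing the second player cannot lose even by playing badly, while yours is longer but produces a concrete winning strategy and an inductive template (respond inside a shared facet, pass to the link) that could be reused in situations where a global parity count is unavailable. One small economy you could have taken: since every vertex lies in a facet of size at least $2$, your pairing response always exists, so the induction never stalls -- but the parity argument makes even that observation unnecessary.
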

\begin{proof}
Since all facets of $\Delta_{G}^\I$ have even size, all maximal legal positions of $G$ also have an even number of pieces. Thus whenever the game ends, the second player will have made the last move, thus wins.
\end{proof}
\begin{proposition}
If an impartial SP-game $G$ has value $0$, then the impartial legal complex has at least one facet of odd dimension, thus even size.
\end{proposition}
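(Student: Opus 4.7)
The plan is to prove the contrapositive: if every facet of $\Delta_G^\I$ has even dimension (odd size), then $G \neq 0$. This is a mirror of the argument for \cref{thm:ImpartialEven}, just with parities swapped. The key structural fact is that in an impartial SP-game, a sequence of legal moves must terminate precisely when the current position is a facet of $\Delta_G^\I$: any legal position is a face, and it admits no further legal move if and only if no vertex can be adjoined while remaining in the complex, i.e., it is maximal.

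Given this, I would argue as follows. Assume every facet of $\Delta_G^\I$ has odd size. Then in any play-through of $G$, the game ends at a facet, so the total number of moves played is odd, no matter which choices the two players make. Under normal play the player unable to move loses, so the first player necessarily makes the final move and wins. Hence $o(G) = \mathscr{N}$, and in particular $G \neq 0$, since games of value $0$ are exactly the second-player wins. Taking the contrapositive yields the statement: if $G = 0$, then at least one facet of $\Delta_G^\I$ has odd dimension (even size).

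I do not anticipate a real obstacle here — the argument is essentially the parity dual of \cref{thm:ImpartialEven}. The only point that requires care is the identification of ``game-terminal position'' with ``facet of the impartial legal complex,'' which relies on condition (iv) of \cref{def:SPgame} (so that any face of $\Delta_G^\I$ is reachable through legal play) together with the definition of $\Delta_G^\I$ (so that having no legal move is equivalent to maximality in the complex). Once that identification is in place, the parity counting is immediate.
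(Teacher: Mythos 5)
Your proof is correct and follows essentially the same route as the paper's: both rest on the parity observation that every play of an impartial SP-game terminates at a maximal legal position (a facet of $\Delta_G^\I$), so if all facets had odd size the first player would always make the last move and win, contradicting $G=0$. The paper states this directly in one sentence, while you argue the contrapositive and make the terminal-position-equals-facet identification explicit; this is the same argument, just spelled out more carefully.
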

\begin{proof}
An impartial SP-game can only be a second-player win if there is at least one maximal legal position with an even number of moves, thus there is a facet of even size.
\end{proof}

We are also able to determine the value of $G$ immediately if its impartial legal complex is pure.
\begin{proposition}\label{thm:pureValue}
If $\Delta_{G}^\I$ is a pure $(n-1)$-dimensional simplicial complex, then the corresponding impartial SP-game $G$ has value $0$ if $n$ is even and $*$ if $n$ is odd.
\end{proposition}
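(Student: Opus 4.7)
The plan is to split on the parity of $n$. When $n$ is even, the statement is immediate from Proposition \ref{thm:ImpartialEven}: every facet of $\Delta^\I_G$ has size $n$, which is even (equivalently odd dimension $n-1$), so $G=0$. The corner case $n=0$ just says $\Delta^\I_G=\{\emptyset\}$ and $G$ has no available moves, which also gives $G=0$.

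When $n$ is odd, my aim is to show every option of $G$ has value $0$, after which the mex rule forces $G=*\mex\{0\}=*1=*$. The auxiliary fact I need is that if $\Delta$ is pure of dimension $n-1$ and $v$ is any vertex of $\Delta$, then $\link_{\Delta} v$ is pure of dimension $n-2$. I would prove this by taking an arbitrary facet $F$ of $\link_\Delta v$, extending $F\cup\{v\}$ to a facet $F'$ of $\Delta$, and using purity to conclude $|F'|=n$; then $F'\setminus\{v\}$ lies in $\link_\Delta v$ and contains $F$, so by maximality $F=F'\setminus\{v\}$ has size $n-1$.

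With this in hand, Remark \ref{remark:optionsLink} tells us that each option of $G$ corresponds to playing a vertex $v$ and has impartial legal complex $\link_{\Delta^\I_G} v$, which by the auxiliary fact is pure of dimension $n-2$. Its facets therefore all have size $n-1$, which is even since $n$ is odd, so Proposition \ref{thm:ImpartialEven} applies and gives that each such option has value $0$. The mex rule for impartial games then yields $G=*\mex\{0\}=*$, as desired.

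The only technical wrinkle is the purity-of-link lemma; although brief, it is what drives the parity toggle under a single move and hence makes the induction-style passage from the even case to the odd case go through. Everything else is bookkeeping on top of Proposition \ref{thm:ImpartialEven} and the mex rule.
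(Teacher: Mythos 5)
Your proposal is correct and follows essentially the same route as the paper's proof: the even case is delegated to \cref{thm:ImpartialEven}, and in the odd case every move leads to a pure $(n-2)$-dimensional complex, hence to a value-$0$ position, giving $G=\{0\}=*$. The only difference is that you explicitly prove the purity-of-links fact that the paper asserts without proof ("any move will be to a pure $(n-2)$-dimensional simplicial complex"), and your argument for it is valid.
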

\begin{proof}
The case in which $n$ is even was already proven in \cref{thm:ImpartialEven}.

We will now consider the case in which $n$ is odd. In this case, any move will be to a pure $(n-2)$-dimensional simplicial complex $\Delta'$. We already know that the impartial SP-game corresponding to $\Delta'$ has value $0$. Thus $G=\{0\}=*$.
\end{proof}

The above result should be particularly useful in applications since proving the impartial complex is pure immediately results in restricting the possible game values to $0$ and $*$.

We also have
\begin{corollary}
If $\Delta$ is the disjoint union of pure simplicial complexes $\Delta_1,\ldots,\Delta_k$ with dimensions $d_1,\ldots,d_k$, then the corresponding impartial SP-game $G$ has value $0$ if all $d_i$ are odd, $*$ if all $d_i$ are even, and $*2$ otherwise.
\end{corollary}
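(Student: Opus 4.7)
The plan is to analyze the options of $G$ directly from the disjoint union structure. Because no face of $\Delta = \Delta_1 \sqcup \cdots \sqcup \Delta_k$ contains vertices from two different components, every first move must lie entirely within one $\Delta_i$, and by \cref{remark:optionsLink} the resulting position has impartial legal complex $\link_{\Delta} v = \link_{\Delta_i} v$ whenever $v \in \Delta_i$. Consequently, the option set of $G$ is exactly the union over $i$ of the option sets of the impartial SP-games $G_i$ on $\Delta_i$. This is the one place requiring some care: the disjoint union is \emph{not} the join, so \cref{thm:disjunctiveStructure} does not apply and we cannot simply add the values of the $G_i$; instead the first move both chooses a component and makes a move inside it.

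Next I would record the standard fact that if $\Delta_i$ is pure of dimension $d_i$, then $\link_{\Delta_i} v$ is pure of dimension $d_i - 1$ for every vertex $v \in \Delta_i$. Indeed, any facet of the link arises as $F \setminus \{v\}$ for some facet $F$ of $\Delta_i$ containing $v$, and such an $F$ has dimension $d_i$; maximality of $F\setminus\{v\}$ in the link follows because any larger face in the link would enlarge $F$, contradicting that $F$ is a facet. Applying \cref{thm:pureValue} with $n = d_i$ to each link then shows that every option of $G_i$ has value $0$ when $d_i$ is even and value $*$ when $d_i$ is odd.

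The proof concludes with a direct mex computation on the aggregated option set of $G$. If every $d_i$ is odd, then every option of $G$ has value $*$, so $G$ has Sprague-Grundy value $\mex\{1\} = 0$, giving $G = 0$. If every $d_i$ is even, every option has value $0$, yielding $\mex\{0\} = 1$ and $G = *$. If both parities occur, the options include at least one of value $0$ and at least one of value $*$, so $\mex\{0,1\} = 2$ and $G = *2$. The only real conceptual point is the opening observation that a disjoint union of legal complexes corresponds to aggregating options rather than taking a disjunctive sum of games; once that is in hand, the argument is a two-line application of \cref{thm:pureValue} followed by a mex calculation.
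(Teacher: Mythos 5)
Your proof is correct and follows essentially the same route as the paper's: a first move lands the game in a single component, the link of a vertex in a pure complex is pure of one lower dimension, so \cref{thm:pureValue} determines the value of every option, and a mex computation finishes. The paper's version is simply terser, leaving the link-purity observation and the explicit mex calculation implicit.
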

\begin{proof}
In this case, a move always forces the game into a single component. If all $d_i$ are odd, then any move is in a component with value $0$, which is thus the value of the entire complex. Similarly if all $d_i$ are even. If there is a mix though, there are moves to $0$ (moving in an even-dimensional complex to an odd dimensional one, all pure) and to $*$  (moving in odd dimensional complex), thus the value is $*2$.
\end{proof}

\section{Further Work}\label{sec:ValuesFurther}
The study of whether specific game values are elements of $\V$ can be continued by looking at values such as switches $\{a\mid b\}$ where $a$ and/or $b$ are not integers, tiny $+_G$ where $G$ is a fraction or non-number, or other values we have not yet discussed at all, for example $\uparrow^n$ and $\uparrow^{[n]}$.

Ideally, we would like to have a recursive construction that works similarly to $\G_n$. This seems difficult though as simply combining two simplicial complexes, such as joining at a vertex or a face, often creates unwanted options besides the ones needed.

In case that $\V$ is not equal to $\G$, the question of course is which values are not possible. It is generally more difficult to show non-existence of a value though, and here again the legal complexes should be of value.

Related to this, we can also ask when the canonical form of an equivalence class containing an SP-game is itself literally equal to an SP-game. Using that the order of moves does not matter, we know that this is not always the case. Consider the following example:
\begin{example}
We have shown that $-\frac{1}{2}$ is the game value of some SP-game. Now consider the canonical form of $-\frac{1}{2}$, which is $\{\{\;\mid 0\}\mid 0\}$. The game tree of this canonical form is
\begin{center}
\begin{tikzpicture}
	\node (a) at (0,0) {$\{\{\;\mid 0\}\mid 0\}$};
	\node (b) at (-1,-1) {$\{\;\mid 0\}$} edge (a);
	\node (c) at (1,-1) {$0$} edge (a);
	\node (d) at (0,-2) {$0$} edge (b);
\end{tikzpicture}
\end{center}
Since there exists a Left move followed by a Right, but no Right followed by a Left, this is not the game tree of an SP-game.
\end{example}


Similar to questions in the general case, we are interested in whether we are able to completely describe how the structure of the impartial legal complex determines the normal play value of the corresponding impartial game.

In more general terms, one can also ask which values are possible under mis\`ere play, and all related questions above.

\bibliographystyle{plain}
\bibliography{Biblio2019Aug}
\end{document}